 \definecolor{refkey}{gray}{.5}
 \definecolor{labelkey}{gray}{.5}
\definecolor{light}{gray}{.9}
\newlength\fullwidth
\numberwithin{equation}{section}
\DeclareMathSymbol{\leqslant}{\mathalpha}{AMSa}{"36} 
\DeclareMathSymbol{\geqslant}{\mathalpha}{AMSa}{"3E} 
\DeclareMathSymbol{\eset}{\mathalpha}{AMSb}{"3F}     
\renewcommand{\leq}{\;\leqslant\;}                   
\renewcommand{\geq}{\;\geqslant\;}                   
\DeclareSymbolFont{bbold}{U}{bbold}{m}{n}
\DeclareSymbolFontAlphabet{\mathbbold}{bbold}
\renewcommand{\b}{\beta}
\newcommand{\D}{\Delta}
\newcommand{\grad}{\nabla}
\renewcommand{\b}{\beta}
\renewcommand{\l}{\lambda}
\renewcommand{\L}{\Lambda}
\renewcommand{\l}{\lambda}
\renewcommand{\a}{\alpha}
\renewcommand{\d}{\delta}
\renewcommand{\t}{\tau}
\newcommand{\g}{\gamma}
\newcommand{\e}{\varepsilon}
\renewcommand{\o}{\omega}
\renewcommand{\O}{\Omega}
\newcommand{\tc}{\thinspace |\thinspace}
\newtheorem{theorem}{Theorem}[section]
\newtheorem{pseudo-theorem}{Pseudo-Theorem}
\newtheorem{lemma}[theorem]{Lemma}
\newtheorem{proposition}[theorem]{Proposition}
\newtheorem{definition}[theorem]{Definition}
\newtheorem{maintheorem}{Theorem}
\newcommand{\Z}{\mathbb Z}
\newcommand{\cB}{\ensuremath{\mathcal B}}
\newcommand{\cC}{\ensuremath{\mathcal C}}
\newcommand{\cH}{\ensuremath{\mathcal H}}
\newcommand{\cU}{\ensuremath{\mathcal U}}
\newcommand{\cZ}{\ensuremath{\mathcal Z}}
\newcommand{\bbE}{{\ensuremath{\mathbb E}} }
\newcommand{\bbN}{{\ensuremath{\mathbb N}} }
\newcommand{\bbP}{{\ensuremath{\mathbb P}} }
\newcommand{\bbR}{{\ensuremath{\mathbb R}} }
\newcommand{\bbZ}{{\ensuremath{\mathbb Z}} }
\newcommand{\sC}{{\ensuremath{\mathscr C}}}
\newcommand{\wt}{\widetilde }
\newcommand{\ind}{{\bf 1}}
\let\a=\alpha \let\b=\beta   \let\d=\delta  \let\e=\varepsilon
 \let\g=\gamma       \let\l=\lambda
      \let\o=\omega      
   \let\t=\tau   
\let\D=\Delta   \let\G=\Gamma  \let\L=\Lambda 
\let\O=\Omega
\title[Entropic repulsion in $|\grad \phi|^p$ surfaces]{ Entropic repulsion in $|\grad \phi|^p$ surfaces: \\ a large deviation bound for all $p\geq 1$}
\author{Pietro Caputo, Fabio Martinelli and Fabio Lucio Toninelli}
\address{Dipartimento di Matematica e Fisica, Universit\`a Roma
  Tre, Largo S. Murialdo 1, 00146 Roma, Italy}
\email{caputo@mat.uniroma3.it, martin@mat.uniroma3.it}
\address{Universit\'e de Lyon, CNRS and Institut Camille Jordan, Universit\'e Lyon 1,
    43 bd
 du 11 novembre 1918, 69622 Villeurbanne, France}
\email{toninelli@math.univ-lyon1.fr}
\begin{document}
\begin{abstract}
  We consider the 
  $(2+1)$-dimensional generalized solid-on-solid (SOS) model, that is the random discrete surface with a gradient potential of the form $|\grad\phi|^{p}$, where
  $p\in [1,+\infty]$.   We show that at low temperature, 
 for a square region $\L$ with side $L$, both
  under the infinite volume measure and under the measure with zero
  boundary conditions around $\L$, the  probability that the surface is nonnegative in $\L$ behaves
  like $\exp(-4\b\tau_{p,\beta} L H_p(L) )$, where $\b$ is the inverse temperature, $\tau_{p,\beta}$ is the
  surface tension at zero tilt, or step free energy, and $H_p(L)$ is the entropic repulsion height, that is the typical height of the field when a positivity constraint is imposed. This generalizes recent results obtained in \cite{CMT}  for the standard SOS model ($p=1$). 
\end{abstract}

\keywords{SOS model, Surface tension, Random surface models, Entropic repulsion, Large deviations.}
\subjclass[2010]{60K35, 60F10, 82B41, 82C24}

\maketitle

\section{Introduction}
Consider the $(2+1)$-dimensional generalized SOS model in a finite region  $\L\subset \bbZ^2$ with zero boundary conditions. This is the Gibbs probability measure $\bbP_\L$, on the set $\O^0_\L$ of 
discrete height functions $\phi:\bbZ^2\mapsto\bbZ$ such that $\phi(x)=0$ for all $x\notin \L$, defined by
\begin{equation} \label{eq:model}
\bbP_\Lambda(\phi) = \frac{1}{Z^0_{\Lambda}} \exp \left[ -\beta\sum_{|x-y|=1}|\phi(x)-\phi(y)|^p
\right], 
\end{equation}
$\phi\in\O^0_\L$, where  $\beta> 0$ is the inverse-temperature, $p\in [1,\infty]$ is a fixed parameter, and $Z^0_{\Lambda}$ is the normalizing partition function. We shall often consider the case where $\L$ is the lattice square  $\L_L:=[-L,L]^2\cap\bbZ^2$  with side $2L+1$ centered at the origin. 

Three values of $p$ correspond to well known models in statistical mechanics and
probability theory: the case $p=1$ is referred to as the standard \emph{SOS model} (cf.\ \cite{BMF,CRAS,CLMST2} and references therein), 
$p=2$ is the \emph{Discrete Gaussian model}
(cf.\ \cite{BMF,LMS} and references therein), while $p=\infty$ corresponds to the \emph{Restricted SOS model}, where only the values $\{-1,0,+1\}$ are allowed for the gradients $\phi(y)-\phi(x)$ along an edge  $(x,y)$
(cf. \cite{BMF} and also \cite{Peled}). 

A common feature of all  models defined by \eqref{eq:model} is that, for $\beta$ sufficiently large, the measures $\bbP_{\L_L}$ have a well defined limit $\bbP$ as $L\to\infty$, namely the  infinite volume Gibbs state with zero boundary
conditions, describing a strongly localized  interface with exponentially decaying correlations; see e.g. \cite{BW}.  Moreover, one can show that these models exhibit the so-called \emph{entropic repulsion} phenomenon. Namely, if one imposes the floor constraint
 \begin{equation}\label{posev}
 \{\phi(x) \geq
0,\;\text{ for every\;} x\in \L_L\},
\end{equation}
then, for $\b$ sufficiently large, with large probability  the field is pushed away from the floor and reaches a height $H_p(L)$, which diverges as $L\to\infty$. The phenomenon was first detected in \cite{BMF}, but precise results about the entropic repulsion height  were obtained only recently in
 \cite{CLMST,CRAS,CLMST2} for the SOS model ($p=1$), and in \cite{LMS} for the generalized SOS 
models ($p>1$). In particular, if one defines $H_p(L)$ by
\begin{equation}
 \label{eq-H-def}
  H_p(L) := \max\left\{ h\in \bbN : \bbP(\phi(0) \geq h) \geq 5\beta/L\right\}\,,
\end{equation}
then it is known that at low temperature for any $\d>0$ there exists a constant $K\in\bbN$ such that, given the event \eqref{posev}, the fraction of sites with height in $[H_p(L)-K,H_p(L)+K]$ is at least $1-\d$ with probability converging to $1$ as $L\to\infty$; see  \cite{CLMST,CRAS,CLMST2,LMS} for even sharper statements of this sort.      
Table \ref{tab1} below summarizes the large $L$ asymptotic behavior of $H_p(L)$ as $p$ varies; see also \cite{LMS}.
\begin{table}[h!]\label{tab1}
  \centering
\caption{Asymptotic behavior of $H_p(L)$, as $L\to \infty$.
  We write $\asymp a$ if $H_p(L)/a$ and $a/H_p(L)$ are $O(1)$; for the other entries $H_p(L)/a=1+o(1)$. 
}

{\setlength{\extrarowheight}{10pt}%
  \label{tab:table1}
  \begin{tabular}{|c|c|c|c|c|c|}
    \hline
$p=1$ & $1<p<2$ &$p=2$ &$2<p<\infty$ &
 $p=+\infty$ 
\\ [1ex] \hline 
$ \frac{1}{4\b}\log L$ & $\left(\frac{c(p)}{\b}\log
                                        L\right)^{1/p}$ & 
$ \sqrt{\frac{1}{4\pi \b}\log L\log\log L}$&
$\asymp \sqrt{\frac{1}{\b}\log L}$&
$\sqrt{\frac{1\pm \e_\b}{4\b +2\log\frac{27}{16}}\log L}$\\ [2ex]
\hline
  \end{tabular}}
\end{table}

In this paper we focus on the large
deviation asymptotic for the positivity event \eqref{posev}, as
$L\to \infty$. This question has been extensively studied for continuous height models such as the lattice massless free fields, see \cite{LebMaes,BDZ,BDG,DG,DGI}, and is a key problem in the study of the entropic repulsion phenomenon; see e.g.\ \cite{Yvan_survey} for a survey.
Our main result here reads as follows.

\begin{maintheorem}
\label{th:main}
There exists $\b_0>0$ such that for any $\b\geq \b_0$ and any $p\in
[1,+\infty]$ one has
\begin{align}\label{LD}
\lim_{L\to\infty}\frac1{8LH_p(L)}\log \bbP_{\L_L} \!\left(\phi(x) \geq
0\;\text{for every\;} x\in \L_L
\right) = -\b\, \t_{p,\b},
\end{align}
where the constant $\t_{p,\b}\in(0,\infty)$ is the surface tension at zero tilt (see Section \ref{prelim} below). Moreover, the same limit holds if we replace $\bbP_{\L_L}$ by the infinite volume Gibbs measure $\bbP$.
\end{maintheorem}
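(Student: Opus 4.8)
The plan is to establish matching upper and lower bounds on $\log\bbP_{\L_L}(\phi(x)\geq0\text{ for every }x\in\L_L)$, both equal to $-8\b\,\t_{p,\b}L\,H_p(L)(1+o(1))$. The governing heuristic is a \emph{staircase} picture: against the zero boundary condition, positivity triggers entropic repulsion and lifts the field to height $\approx H_p(L)$ in the bulk, so that, recording the level sets $\{\phi\geq k\}$ for $k=1,\dots,H_p(L)$, one sees that many nested level lines, each encircling almost all of $\L_L$ and hence of length at least the perimeter $8L(1+o(1))$. For $p>1$ a single cliff of height $H$ along the perimeter would cost of order $8\b H^{p}L$, far exceeding the gentle staircase's cost $8\b\,\t_{p,\b}HL$, so the staircase is optimal and the relevant constant is exactly the zero-tilt surface tension (step free energy) $\t_{p,\b}$. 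Summing the $H_p(L)$ step free energies gives $\sum_{k=1}^{H_p(L)}\b\,\t_{p,\b}\cdot 8L=8\b\,\t_{p,\b}L\,H_p(L)$, the claimed rate.

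For the lower bound I would argue constructively. Fix $H=H_p(L)$ and a concentric box $\L_{L'}$ with $L'=L(1-o(1))$, and consider the event that the annulus $\L_L\setminus\L_{L'}$ carries $H$ nested square level lines, the $k$-th being the outer boundary of $\{\phi\geq k\}$, forcing $\phi\geq H$ on $\L_{L'}$. By the definition of $\t_{p,\b}$ as a step free energy, producing one such line costs $\exp(-\b\,\t_{p,\b}\,8L(1+o(1)))$ relative to the flat state, and a decoupling estimate between successive levels lets the $H$ factors multiply to $\exp(-8\b\,\t_{p,\b}L\,H(1+o(1)))$. Conditioned on this lifting, the interior field stochastically dominates the measure with boundary value $H$ on $\partial\L_{L'}$, which by the gradient-only form of the Hamiltonian is the infinite-volume interface raised by $H$; since $H=H_p(L)$ makes the single-site deviation probability of order $1/L$ by the definition \eqref{eq-H-def}, the FKG inequality bounds the conditional positivity probability below by a product of single-site probabilities, each of the form $1-O(1/L)$, hence by $e^{-O(L)}=e^{-o(LH_p(L))}$, which is negligible on the exponential scale $LH_p(L)$. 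Positivity on the thin annulus is absorbed into the same $o(1)$.

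For the upper bound I would run this in reverse. The entropic repulsion estimates of \cite{CLMST,CRAS,CLMST2,LMS} assert that, conditionally on positivity, the field lies in $[H_p(L)-K,H_p(L)+K]$ on all but a $\d$-fraction of $\L_L$ with probability tending to $1$; hence $\bbP_{\L_L}(\text{positivity})$ is, up to a factor $1+o(1)$, at most the probability that $\phi\geq H_p(L)(1-o(1))$ on a $(1-\d)$-fraction of $\L_L$. Together with $\phi\equiv0$ off $\L_L$, isoperimetry forces, for each level $k\leq H_p(L)(1-o(1))$, an outer level line of length $\geq 8L(1-o(1))$. The decisive probabilistic input is a cluster-expansion bound showing that the joint occurrence of nested level lines of total length $\ell$ has probability at most $\exp(-\b\,\t_{p,\b}\,\ell(1-o(1)))$; summing the per-level lengths yields the matching upper bound $\exp(-8\b\,\t_{p,\b}L\,H_p(L)(1-o(1)))$.

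The main obstacle, and the genuinely new content beyond the $p=1$ analysis of \cite{CMT}, is the \emph{sharp} identification of the nested multi-line cost with a product of single-step surface tensions, \emph{uniformly in} $p\in[1,\infty]$. Because $\t_{p,\b}$ is a free energy, crude Peierls (energy) bounds do not capture the constant; one needs a cluster expansion for the generalized potential $|\grad\phi|^{p}$ convergent for all $\b\geq\b_0$ and all $p$, and one must control the entropic interaction between the non-crossing nested level lines so that their free energies genuinely add rather than merely bound one another. The extreme cases are the most delicate: $p=1$, where large gradients proliferate, and $p=\infty$ (RSOS), where gradients are frozen in $\{-1,0,1\}$ and the combinatorics of admissible level lines is rigid. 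Finally, the transfer from $\bbP_{\L_L}$ to the infinite-volume $\bbP$ is routine: $\{\phi\geq0\text{ on }\L_L\}$ is a local event, the two measures differ only through boundary fluctuations confined to a thin annulus, and FKG monotonicity combined with the exponential decay of correlations forces the two probabilities to share the same exponential rate.
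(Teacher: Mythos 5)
Your plan coincides, point by point, with the paper's strategy: a lower bound obtained by constructing $H_p(L)$ nested level lines and controlling the conditional positivity cost via FKG and the definition \eqref{eq-H-def} of $H_p(L)$; an upper bound obtained by combining the entropic-repulsion shape result of \cite{LMS} with a surface-tension estimate for the forced nested contours; and the same conditioning trick to pass between $\bbP_{\L_L}$ and $\bbP$. The trouble is that the step you yourself call ``the decisive probabilistic input'' --- that the joint occurrence of nested level lines of total length $\ell$ has probability at most $\exp(-\b\t_{p,\b}\ell(1-o(1)))$ --- is left entirely as an assertion, and it is precisely the hardest part of the proof, not a citable input. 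Nested $h$-contours interact both through the ordering constraint and through the field between them, and since $\t_{p,\b}$ is a \emph{free energy}, no Peierls-type estimate can produce this constant. The paper's mechanism is the staircase ensemble of Section \ref{stairs}: $n$ nested crossing contours are encoded by a multi-step boundary condition, the limiting partition functions $\cZ(a_1,\dots,a_n;b_1,\dots,b_n;L)$ are defined, and the factorization \eqref{eq:2} of Theorem \ref{th:mon}, namely $\cZ(a_1,\dots,a_n;b_1,\dots,b_n;L)\leq \prod_{i=1}^n\cZ(a_i;b_i;L)$, is proved. Its proof (Lemma \ref{lem:1}) is not soft: one interpolates the boundary condition, writes $\log(Z_\L^{\t}/Z_\L^{\t'})$ as $\int_0^1\bbE_\L^{\t_s}[\psi_s(\phi(w))+\psi_s(\phi(w'))]\,ds$ with $\psi_s(x)=-\b\tfrac{d}{ds}|x-(n-1+s)|^p$, uses convexity of $|\cdot|^p$ (this is exactly where $p\geq 1$ enters) to make $\psi_s$ increasing so that FKG permits raising the boundary condition, and then kills the integral by a shift-and-rotation symmetry. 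Without this theorem, or a substitute, your upper bound does not close. Likewise, the cluster expansion you invoke must be established \emph{in the presence of the sign constraints} $\D^\pm$ created by the contours, which is the content of Proposition \ref{lem:dks} via the one-to-one correspondence between legal surfaces and legal clusters (Lemma \ref{lem:ce2}); this is also what makes rigorous the ``decoupling estimate between successive levels'' in your lower bound.

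Two smaller points. First, in the lower bound you impose \emph{square} level lines: taken literally, this captures only the energetic cost per line, i.e.\ a rate constant $\b$ rather than $\b\t_{p,\b}$; the step free energy differs from the bare energy per unit length by the entropy of shape fluctuations and by decoration terms, and these corrections are exactly what distinguishes the sharp constant in \eqref{LD}. One must instead sum over all contours confined to disjoint annuli, as in the event $E$ of \eqref{eveE}, and convert $\sum_\g e^{-\b|\g|}(\cdots)$ into the free energy via the expansion. Second, every FKG application for the sign-constrained, contour-modified measures must be justified by the lattice condition, which reduces to the inequality \eqref{convo} and again holds only because $t\mapsto h_{xy}(|t|)$ is convex for $p\geq1$ (Lemma \ref{lemelbo}); this verification is short but is the other place where the hypothesis $p\geq 1$ is indispensable, so it cannot be taken for granted.
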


In the case of the standard SOS model, that is $p=1$, Theorem \ref{th:main} was proved in \cite[Theorem 1.2]{CMT}. As in that case, one can give the following heuristic explanation of the result: namely, the most convenient way to realize the event $ \phi(x) \geq
0$, for all $x\in \L_L$ is to lift the surface inside $\L_L$ up to the typical entropic repulsion profile, which entails the presence of   $H_p(L)(1+o(1))$ nested level lines each of length equal to $|\partial\L_L|(1+o(1))$, as $L\to\infty$, where $\partial\L_L$ denotes the boundary of the box $\L_L$; according to the large deviations theory each of these level lines has a cost roughly given by $e^{-\b\t_{p,\b}|\partial\L_L|}$; since $|\partial\L_L|=8L$, if these lines can be approximated by independent loops then  one obtains \eqref{LD}.  
To make this heuristic rigorous, the proof of \cite[Theorem 1.2]{CMT} used in a crucial way two facts: a) conditioned on the event \eqref{posev} the above described  scenario involving $H_p(L)(1+o(1))$ nested level lines is indeed very likely, and b) that the interaction between these level lines (beyond the natural order constraint) can be effectively neglected. 
Part a)  was essentially established in \cite{CLMST,CLMST2} for the SOS model. On the other hand part b) used a new recursive monotonicity argument based on FKG-type inequalities satisfied by the SOS models; see  \cite[Theorem 4.1]{CMT}.

The proof of Theorem \ref{th:main} will be based on a similar strategy. For part a) we shall rely on results obtained in \cite{LMS}, see Theorem \ref{mainthm:floor-shape} below. For part b) we shall need an extension to all $p\in[1,\infty]$ of the above mentioned monotonicity statements, see Theorem \ref{th:mon} below.     
From a technical point of view we remark that 
a key tool in our analysis is the so-called \emph{cluster expansion}. The standard version presented e.g.\ in \cite{BW} was adapted to the study of the statistics of the SOS level lines (contours) in \cite{CLMST,CLMST2}. Here we provide a nontrivial extension of those arguments to the generalized case $p>1$; see Section \ref{prelim} for the details.     
 
In order to keep the size of this note to a minimum, below we only give a detailed discussion of the points where the general case $p>1$ differs substantially from the case $p=1$. 
The remainder of the paper is as follows. Section \ref{prelim} gathers several basic preliminary facts concerning contours, cluster expansion and surface tension. Section \ref{sec:lower} proves the lower bound in Theorem \ref{th:main}. In Section \ref{sec:mon} we establish the needed monotonicity results. The upper bound in Theorem \ref{th:main} is discussed in Section \ref{sec:upper}.

\section{Contours, cluster expansion and surface tension}\label{prelim}
For any finite $\L\subset\bbZ^2$, let $\partial \L$ be the external boundary of
$\L$, i.e.\ the set of $y\in\L^c$ such that $xy$ is an edge of $\bbZ^2$, also called bond below, for some $x\in\L$. Let $\cB_\L\subset {\bbZ^2}$ denote
the set of $\bbZ^2$ edges  of the form $e=xy$ with $x\in\L$ and $y \in
\L\cup \partial \L$. A height configuration $\t: \L^c\mapsto \bbZ$ is called a boundary condition. 
We define $\O_\L^\t$ as the set of height functions $\eta:\bbZ^2\mapsto \bbZ$ such that $\eta(x)=\t(x)$ for all $x\notin \L$.   
For a fixed $p\in[1,\infty]$, the generalized SOS Hamiltonian in $\L$ with boundary condition $\t$ is the function defined by 
\begin{align}\label{psos_ham}
\cH_\L^\t (\phi) = \sum_{xy\in\cB_\L} |\phi(x)-\phi(y)|^p \,,\quad \;\phi\in\O_\L^\t.
\end{align}
The Gibbs measure in $\L$ with  boundary condition $\t$ at inverse temperature $\b$ is the probability measure 
$\bbP_\L^\t$ on $\O_\L^\t$ given by
\begin{align}\label{psos_gibbs}
\bbP_\L^\t(\phi) = \frac1{Z_\L^\t}\,\exp{\left(-\b\cH_\L^\t (\phi)\right)}\,,
\end{align}
where $Z_\L^\t$ is the partition function $$Z_\L^\t=\sum_{\phi\in\O_\L^\t}\exp{\left(-\b\cH_\L^\t (\phi)\right)}.$$ When $\t=0$ we simply write $Z_\L$ for $Z_\L^0$ and $\bbP_\L$ for $\bbP_\L^0$. 
Below, we consider $\L$ of rectangular shape, and write $\L_{L,M}$, with $L,M\in \bbN$, for the rectangle 
$\L_{L,M}=([-L,L]\times [-M,M])\cap\bbZ^2$ centered at the origin. When $L=M$ we  write $\L_L$ for the square of side $2L+1$. 

It is understood that the model at $p=\infty$ corresponds to case where the gradient term $|\phi(x)-\phi(y)|^p$ in \eqref{psos_ham} is replaced by $|\phi(x)-\phi(y)|\ind_{\phi(x)-\phi(y)\in\{-1,0,+1\}}$.
In the sequel, we will be working explicitly in the setting $p\in[1,\infty)$. The case $p=\infty$ can be recovered by taking the limit $p\to\infty$ in all steps of the proof.

%

\subsection{Geometric contours, and $h$-contours}\label{hcont}
We denote by ${\bbZ^2}^*\equiv \bbZ^2+(\frac12,\frac12)$  the dual graph of $\bbZ^2$. A dual edge or bond $e$ is seen as a segment connecting two neighboring vertices in ${\bbZ^2}^*$. Given $\L\subset
\bbZ^2$ we write $\phi_\L$ for the restriction of $\phi$ to $\L$, that is $\phi_\L=(\phi(x),\,x\in\L)$. We use the following notion of contours.
\begin{definition}\label{contourdef}
Two sites $x,y$ in $\bbZ^2$ are said to be {\em separated by a dual bond $e$} if
their distance (in $\bbR^2$) from $e$ is $\tfrac12$. A pair of
orthogonal dual
bonds which meet in a site $x^*\in {\Z^2}^*$ is said to form a
{\em linked pair of bonds} if both are on the same side of the
forty-five degrees line (w.r.t. to the horizontal axis) passing through $x^*$. A {\em geometric contour} (for
short, a contour in the sequel) is a
sequence $e_0,\ldots,e_n$ of dual bonds such that:
\begin{enumerate}
\item $e_i\ne e_j$ for $i\ne j$, except for $i=0$ and $j=n$.
\item for every $i$, $e_i$ and $e_{i+1}$ have a common vertex in ${\Z^2}^*$.
\item if $e_i,e_{i+1},e_j,e_{j+1}$ all have a common vertex $x^*\in {\Z^2}^*$,
then $e_i,e_{i+1}$ and $e_j,e_{j+1}$ are linked pairs of bonds.
\end{enumerate}
We denote by $|\gamma|$ the length of a contour $\gamma$, that is the number of distinct bonds in $\g$.
If $e_0=e_n$ we say that the contour is \emph{closed}, otherwise it is
\emph{open}.  If $\g$ is closed, then its interior
(the sites in $\bbZ^2$ it surrounds) is denoted by $\Lambda_\gamma$ and its
interior area (the number of such sites) by
$|\Lambda_\gamma|$. For a closed $\g$, we let $\Delta_{\gamma}$ be the set of sites in $\Z^2$ such that either their distance
(in $\bbR^2$) from $\gamma$ is $\tfrac12$, or their distance from the set
of vertices in ${\Z^2}^*$ where two non-linked bonds of $\gamma$ meet
equals $\tfrac1{\sqrt2}$. Finally, we set $\Delta^+_\gamma=\Delta_\gamma\cap
\L_\g$ and $\Delta^-_\gamma = \Delta_\gamma\setminus \Delta^+_\gamma$. We observe that for any $x\in \D^+_\g$ there exists $y\in \D_\g^-$ such that either $x,y$ are nearest neighbor or their distance is $\sqrt{2}$.  
Given a closed contour $\gamma$ we say that $\gamma$ is an \emph{$h$-contour}
for the configuration $\phi$ if
\[
\phi_{\Delta^-_\gamma}\leq h-1, \quad \phi_{\Delta^+_\gamma}\geq h.
\]
Finally $\sC_{\gamma,h}$ will denote the event that $\gamma$ is an $h$-contour.
\end{definition}

Let us illustrate the above definitions with some simple examples. Consider the elementary contour $\g$ given by the unit square surrounding a site $x\in\bbZ^2$:  $\g$ is an $h$-contour iff $\phi({x})\geq h$ and $\phi(y)\leq h-1$ for all $y\in\{x\pm e_1, x\pm e_2, x+ e_1+e_2,x-e_1-e_2\}$. 
Another explicit example is given in Figure \ref{fig:002} below. We observe that 
a geometric contour $\g$ could be at the same time a
$h$-contour and a $h'$-contour with $h\neq h'$ for some configuration $\phi$. More generally two
geometric contours $\g,\g'$ could be contours for the same surface
with different height parameters even if $\g\cap\g'\neq\emptyset$, but then
the interior of one of them must be contained in the interior of the other. 

\begin{figure}[htb]
        \centering
 \begin{overpic}[scale=0.37]{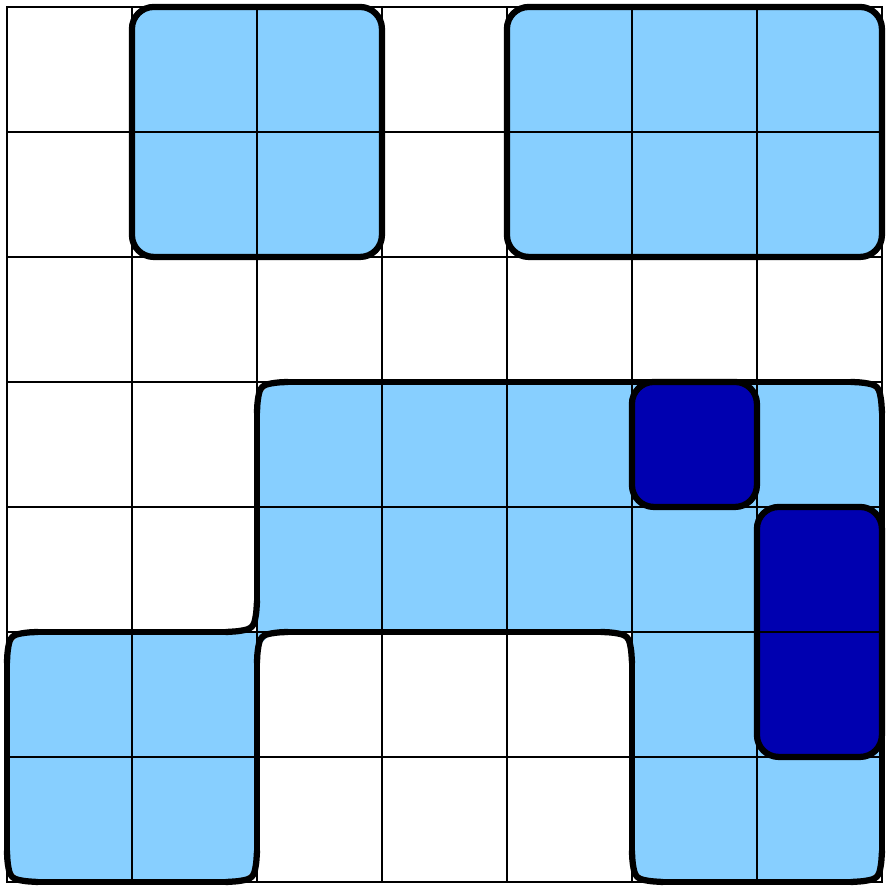}
\end{overpic}
        \caption{
        Example of a surface configuration $\phi$  in the $7\times 7$ box $\L_3$ with zero boundary conditions: white sites have height $0$, shaded sites have height $1$ and darker sites have height $2$. According to Definition \ref{contourdef} there are three $1$-contours and two  $2$-contours in this configuration.
          }\label{fig:002}
\end{figure}

\subsection{Cluster expansion in presence of sign constraints}\label{clexp}
Let $\g=(e_0,e_1,\dots,e_n)$ be an open
contour such that the first and last bonds of
$\g$ are horizontal and cut the two opposite vertical sides of a rectangle $\L\subset\bbZ^2$,  while all the other bonds are contained in $\L$. For such a $\g$ we define the sets $\D^\pm$ as follows, see also Figure \ref{fig:013}: connect the two endpoints of $\g$ by adding dual bonds that stay outside the rectangle $\L$, so that one has now a closed contour $\g'$, and set $\D^\pm := \D^\pm_{\g'}\cap \L$, where $\D^\pm_{\g'}$ is defined in Definition \ref{contourdef}.
For $xy\in\cB_\L$, $a\geq 0$, we define the function $h_{xy}(a)=(1+a)^p-1$ if $xy$ crosses a dual bond $e_i$ of $\g$, and  
$h_{xy}(a)=a^p$ otherwise. 
Consider the constrained partition function 
\begin{equation}\label{zod}
Z^0_{\L,\D^+,\D^-}=\sum_{\phi\in\O_\L^0(\D^\pm)}\exp\left[-\beta
\sum_{xy\in\cB_\L}h_{xy}(|\phi(x)-\phi(y)|)\right], \quad
\end{equation}
where $\O_\L^0(\D^\pm)$ denotes the set of all $\phi\in\O^0_\L$ that satisfy the constraints $\D^\pm$, that is $\phi_{\D^+}\ge 0$, and 
$\phi_{\D^-}\le 0$. We call such surfaces {\em legal}.
\begin{figure}
        \centering
 \begin{overpic}[scale=0.45]{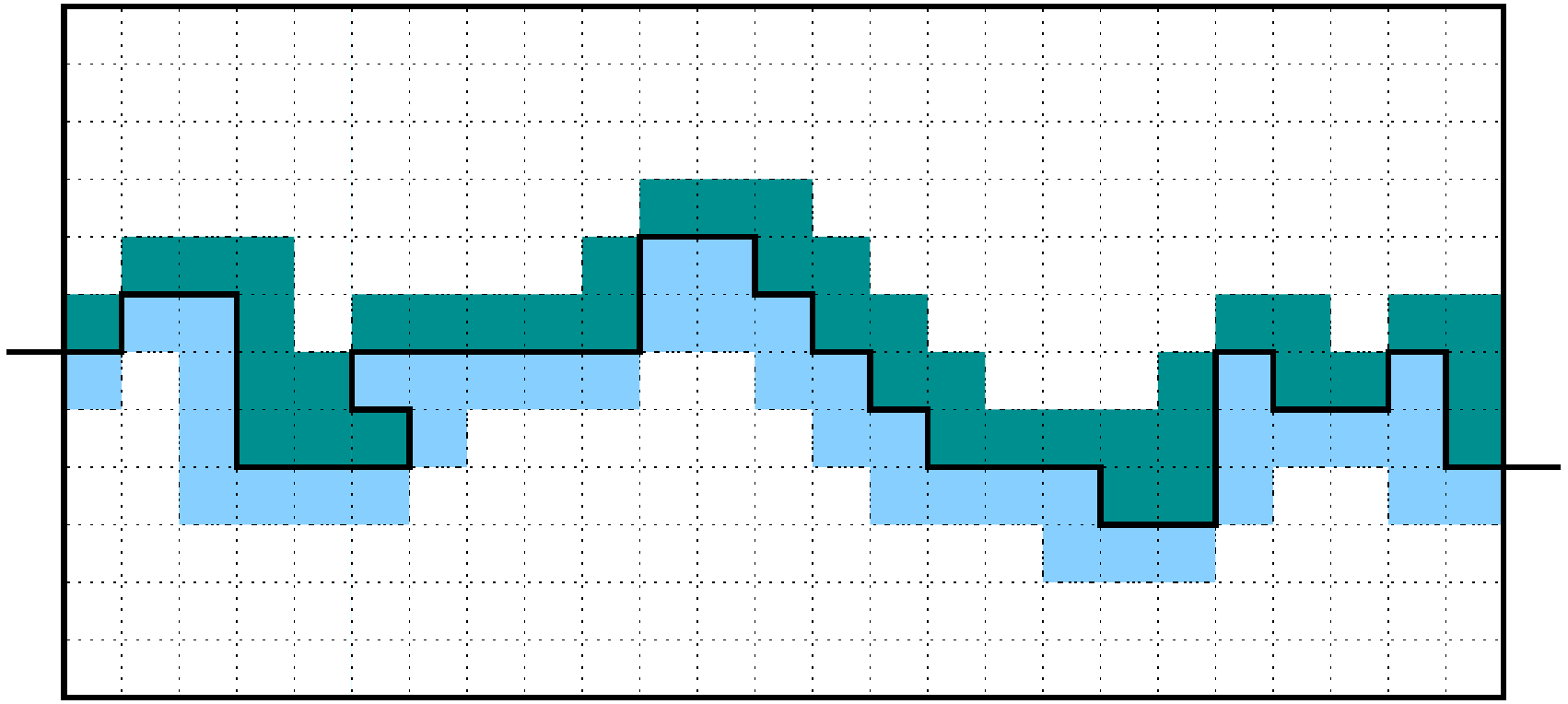}
\put(46,11){$\D^-$}
\put(57,29){$\D^+$}
\end{overpic}
        \caption{The rectangle $\L$ with an open contour $\g$. The sets $\D^+$ and $\D^-$ are respectively the shaded regions above and below the contour $\g$.}\label{fig:013}
\end{figure}

The partition function \eqref{zod} arises naturally in the following important setting. 
Consider the rectangle $\L=\L_{L,M}$, for some $L,M\in \bbN$. 
Fix integers $a,b\in[-M,M]$, 
and define the boundary condition $\t$ on $\partial \L_{L,M}$ so that 
$$
\t(u,v)=1_{\{u=-L-1,\,v\geq a\}} + 1_{\{u=L+1,\,v\geq b\}} + 1_{\{-L\leq u\leq L,\, v=M+1\}},
$$
for all $(u,v)\in\partial \L_{L,M}$.
Let $Z(a; b; L,M):=Z_{\L}^\t$ denote the partition function of the $SOS$ model in $\L$ with boundary condition $\t$, and note that for every $\phi$ contributing to $Z_{\L}^\t$ there must exist an open contour $\g$ crossing $\L$ from left to right as above, joining the dual points $(-L-1/2,a-1/2)$ and $(L+1/2,b-1/2)$, such that $\g$ is a $1$-contour for $\phi$. 
Then, adding and subtracting $1$ for each dual edge in $\g$ and changing variable to $\phi-1$ in the region above $\g$ one finds the expression
\begin{equation}\label{zetod}
Z(a; b; L,M) = \sum_{\g} e^{-\b|\g|}Z^0_{\L,\D^+,\D^-},
\end{equation}
where the sum is over all possible open contours as above and $Z^0_{\L,\D^+,\D^-}$ is given in \eqref{zod}. 

In 
\cite[Lemma 2.2]{CMT} and \cite[Appendix A]{CLMST} we have obtained a cluster expansion for \eqref{zod} in the case where $p=1$ (so that $h_{xy}(a)=a$ for all ${xy}$) and  the two sets $\D^\pm$ are
two arbitrary subsets of $\partial_*\L$, where  $\partial_*\L$ is defined as the set of points of $\L$ that are either at distance $1$ from $\partial \L$, or at distance $\sqrt{2}$ from $\partial \L$ in the South-West or North-East direction. Here we need the following extension of that.  
\begin{proposition}
\label{lem:dks}
There exists $\b_0>0$ independent of $\L$ and $p\in[1,\infty]$, such that if $\b\geq \b_0$, then \eqref{zod} satisfies
\begin{equation}\label{zetaexp}
\log  Z^0_{\L,\D_+,\D_-}=\sum_{V\subset \L}\varphi_{\D^+,\D^-}(V),
\end{equation}
where the potentials $\varphi_{\D^+,\D^-}(V)$ are such that
\begin{enumerate}
\item $\varphi_{\D^+,\D^-}(V)=0$ if $V$ is not connected.
\item $\varphi_{\D^+,\D^-}(V)=\varphi_0(V)$ if $V\cap({\D^+\cup \D^-})=\emptyset$, for
  some shift invariant  potential $V\mapsto \varphi_0(V)$,
that is
\[
\varphi_0(V)=
\varphi_0(V+x)\quad \forall \,x\in\bbZ^2,
\]
where $\varphi_0$ is independent of $\D^\pm,\L$.
\item For all $V\subset \L$:
\[
\sup_{\D^\pm}|\varphi_{\D^+,\D^-}(V)|\leq
  \exp(-(\b-\b_0)\, d(V))
\]
where $d(V)$ is the cardinality of the smallest connected set of all dual
bonds separating points of $V$ from points of its
complement (a dual bond is said to separate $V$ from $V^c$ iff it is orthogonal to a bond connecting $V$ to $V^c$). 
\end{enumerate}
\end{proposition}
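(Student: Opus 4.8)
The plan is to expand $Z^0_{\L,\D^+,\D^-}$ in a low-temperature polymer (cluster) expansion around the flat ground state $\phi\equiv 0$, which is legal (since $0\geq 0$ and $0\leq 0$) and has zero energy, and then to read off the three properties from the standard cluster expansion machinery. The representation itself is identical to the one already used at $p=1$ in \cite{CLMST,CMT}, so I would only dwell on the estimates that have to be made uniform in $p$. Given a legal $\phi$, set $\mathrm{supp}(\phi)=\{x\in\L:\phi(x)\neq 0\}$ and let $P_1,\dots,P_k$ be its connected components. Because distinct components share neither a site nor a bond, every bond of $\cB_\L$ carrying a nonzero gradient is incident to exactly one $P_i$, and the energy splits as $\sum_i\cH_{P_i}(\phi)$, where $\cH_{P}$ collects the bonds of $\cB_\L$ incident to $P$ (with the crossing weight $(1+a)^p-1$ on the bonds cut by $\g$ and $a^p$ elsewhere). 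This gives a genuine hard-core polymer model
\[
Z^0_{\L,\D^+,\D^-}=\sum_{\{P_1,\dots,P_k\}}\ \prod_{i=1}^k \zeta(P_i),\qquad \zeta(P)=\sum_{\substack{\mathrm{supp}(\phi)=P\\ \phi\ \mathrm{legal}}} e^{-\b\cH_{P}(\phi)},
\]
the sum running over families of pairwise non-adjacent connected sets. Here the empty family reproduces the ground-state weight $1$, the sign constraints enter only through the restriction to legal $\phi$ in $\zeta(P)$, and the zero boundary condition is transparent because it coincides with the ground state.

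The one place where $p$ genuinely enters is the polymer weight bound, and there I would use the elementary inequalities, valid for every integer $a\geq 1$ and every $p\geq 1$,
\[
a^p\geq a,\qquad (1+a)^p-1\geq a
\]
(the second by Bernoulli's inequality). They show that the $p$-energy of any configuration dominates the $p=1$ energy of the same configuration with the same crossing structure, so that $0\leq \zeta(P)\leq \zeta_1(P)$ termwise, where $\zeta_1$ is the corresponding weight of the standard SOS model (for which crossing and ordinary bonds both carry weight $a$). Since the combinatorial count of connected excitations through a given bond is purely geometric, hence $p$-independent, and since the bound $\zeta_1(P)\leq e^{-(\b-c_0)d(P)}$ with $c_0$ independent of $\L,\D^\pm$ was already obtained in \cite{CLMST,CMT}, one concludes $|\zeta(P)|\leq e^{-(\b-c_0)d(P)}$ with $c_0$ independent of $p$, $\L$ and $\D^\pm$ (the sign constraints can only delete nonnegative terms from $\zeta$, so the bound is uniform over $\D^\pm$).

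With such weights, for $\b$ above a $p$-independent threshold $\b_0$ the Koteck\'y--Preiss criterion holds and the cluster expansion theorem (in the form recalled in \cite{BW}) yields $\log Z^0_{\L,\D^+,\D^-}=\sum_{C}\Phi^T(C)$, a sum over clusters $C$ of polymers whose truncated weights $\Phi^T(C)$ decay exponentially in the total size of $C$. I would then set $\varphi_{\D^+,\D^-}(V):=\sum_{C:\,\overline{C}=V}\Phi^T(C)$, where $\overline{C}$ is the union of the supports of the polymers occurring in $C$. Property (1) is immediate, since only connected clusters contribute, forcing $V$ connected. Property (3) follows by summing the exponential cluster decay over clusters with $\overline{C}=V$, bounded by $e^{-(\b-\b_0)d(V)}$ uniformly in $\D^\pm$. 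For property (2), if $V\cap(\D^+\cup\D^-)=\emptyset$ then every polymer of every contributing cluster avoids $\D^\pm$, hence is incident to no crossing bond and feels no sign constraint; its weight therefore equals the free bulk weight $\zeta_0(P)$ built from the potential $a^p$ alone, which is translation invariant and independent of $\L$ and $\D^\pm$, whence $\varphi_{\D^+,\D^-}(V)=\varphi_0(V)$ for a shift-invariant $\varphi_0$ (depending only on $p$).

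The main obstacle I anticipate is the weight estimate of the second paragraph: it must be simultaneously uniform in $p\in[1,\infty]$ and sharp enough to yield decay in the geometric quantity $d(V)$ rather than merely in the area $|V|$. The domination $a^p\geq a$ secures the uniformity, while the geometric sharpness rests on the fact that a connected excitation is surrounded, in the dual lattice, by a connected set of disagreement bonds of cardinality at least $d(P)$, so that it is the energy, not the number of excited sites, that is being summed against. Finally, taking $p\to\infty$ in all the above recovers the restricted-SOS case, as announced.
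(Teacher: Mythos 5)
There is a genuine gap, and it sits exactly where you declare the matter standard: the weight bound for your polymers. Your polymers are the connected components $P$ of the \emph{site} support of $\phi$, and your weight $\zeta(P)$ is a sum over \emph{all} legal surfaces whose support is exactly $P$; such a weight cannot satisfy $\zeta(P)\leq e^{-(\b-c_0)d(P)}$, for $p=1$ or any other $p$, because it contains the full partition function of the interior of $P$. Concretely, let $P$ be an $n\times n$ square far from $\D^\pm$. Every surface $\phi=1+\psi$ with $\psi\geq 0$ supported in the interior of $P$ has support exactly $P$, and raising any family of pairwise non-adjacent interior sites by one costs $4\b$ per site, so
\begin{equation}
\zeta(P)\ \geq\ e^{-4\b n}\,\bigl(1+e^{-4\b}\bigr)^{cn^2}\ =\ e^{-4\b n+c'e^{-4\b}n^2},
\end{equation}
while $d(P)\asymp n$. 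Since $|P|=n^2$ can be of order $d(P)^2$, the bulk entropy factor $e^{c'e^{-4\b}|P|}$ beats $e^{-\b d(P)}$ as soon as $n\gg e^{4\b}$: the claimed bound fails, and in fact $\sum_{P\ni 0}\zeta(P)=\infty$ for every $\b$, so the Koteck\'y--Preiss criterion never holds and the expansion of $\log Z^0_{\L,\D^+,\D^-}$ over these polymers does not converge (your identity $Z^0_{\L,\D^+,\D^-}=\sum\prod_i\zeta(P_i)$ is a correct rewriting, but it yields nothing). The bound you attribute to \cite{CLMST,CMT} is not a bound on site-support weights; it concerns clusters in the sense of \cite{BW}, i.e.\ connected sets of \emph{dual bonds} decorated by height gradients, whose weight is a single Boltzmann factor $e^{-\b\sum_{b\in\G}h_{b^\perp}(|\grad_{b^\perp}|)}$ and therefore genuinely decays in $|\G|$. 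It is precisely to split the interior fluctuations of a plateau off into separate polymers that this finer decomposition is needed.

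The trade-off you missed is the actual content of the paper's proof. With the fine (dual-bond) clusters the weight estimates are easy and your termwise domination $h_{xy}(a)\geq a\geq 1$ for $a\geq 1$ is indeed the right observation to make them uniform in $p\in[1,\infty]$; but clusters can now be \emph{nested}, and the height of the surface at a site inside a nested cluster is the sum of the contributions of all clusters surrounding it. Hence ``the surface satisfies the constraints on $\D^\pm$'' does not trivially factorize into ``each cluster is legal''; establishing this factorization is the nontrivial step (Lemmas \ref{lem:ce1} and \ref{lem:ce2} in the paper), and it uses the specific geometry of $\D^\pm$: every point of $\D^+$ has a point of $\D^-$ within distance $\sqrt 2$, so a cluster face meeting both sets carries height zero, and a face meeting only $\D^+$ meets it only in $\partial_*R$, which makes nested insertions harmless. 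Your coarse polymers make the constraint factorization trivial (pointwise constraints, disjoint supports) exactly because they swallow all nesting into one weight --- and that coarseness is what destroys convergence. To repair the proposal you would have to redo it with the \cite{BW} clusters, at which point you face, and must prove, the legality-factorization lemma that constitutes the paper's argument.
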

The core of the proof is to show that the ``usual" cluster representation
(e.g.\ \cite[Corollary 2.2]{BW}) continues to hold for the constrained partition
function  \eqref{zod}. This nontrivial fact follows from Lemma
\ref{lem:ce2} below. Once this lemma is
established the rest of the proof is obtained as in \cite{CMT} and
\cite[Appendix A]{CLMST}. 
Following \cite{BW} we first
define what we mean by clusters. We assume that an ordering of all dual bonds has been defined, and that an orientation for each bond of $\bbZ^2$ has been fixed. Both choices are arbitrary. 
\begin{definition}\ 
\begin{enumerate}
\item A \emph{cluster} $X=\left(\G,\grad_1,\grad_2,\dots,\grad_n\right)$
  consists of a finite connected set of $n$ dual bonds $\G$, called
  the \emph{support of $X$}, and certain height gradients $\grad_i\in
  \bbZ$ for each bond orthogonal to a dual bond in $\G$, appearing in the prescribed dual bond ordering. The set $\G$ is such that
  any endpoint of one of its bonds is shared by at least another bond in $\G$.
  The support of $X$ divides $\bbR^2$ into an infinite component and a collection of finite connected sets called the \emph{faces of $X$}. The interior ${\rm Int}(X)\subset \bbR^2$ is the union of the finite faces of $X$. The external boundary $\partial_{\rm ext}X$ consists of those points of $\bbZ^2\cap(\bbR^2\setminus {\rm Int}(X))$ which have a nearest neighbor in ${\rm Int}(X)$. 
\item A \emph{configuration} of clusters is a unordered finite collection of clusters $\{X_1,\dots,X_n\}$. A configuration is \emph{compatible}  if the corresponding supports $\G_i$ are mutually disjoint.
\item A cluster $X$ is \emph{admissible} if there exists a unique surface $\Phi=\Phi(X):\bbZ^2\mapsto\bbZ$ such that $\Phi_{\partial_{\rm ext}(X)}=0$ and the gradients of $\Phi$ coincide with those specified by $X$ along each bond orthogonal to a dual bond in $\G$ and are zero on every bond that does not intersect $\G$. By construction $\Phi$ is constant on each face of $X$. 

\item In the setting of Proposition \ref{lem:dks}, a cluster $X$ is called  \emph{legal} if it is admissible and the associated surface $\Phi$ satisfies the  constraints on $\D^\pm$, that is $\Phi\in\O_\L^0(\D^{\pm})$.   
\end{enumerate}
\end{definition}
In \cite{BW} it was proved that the set of surfaces with zero
boundary conditions, but without the constraints on $\D^\pm$, is in
one-to-one correspondence with the set of all  compatible configurations of admissible
clusters. In Lemma \ref{lem:ce2} below we extend the result to the constrained case.
We start with a technical lemma. 
\begin{lemma}
\label{lem:ce1}
In the setting of Proposition \ref{lem:dks}, given a legal cluster $X$ with support $\G$ let $R$ be one of the
faces of $X$. If
$R\cap \D^+\neq \emptyset$ and $R\cap \D^-=\emptyset$ then necessarily
$R\cap \D^+\subset \partial_{*}R$.
The same applies by inverting the role of $\D^+$ and $\D^-$. If instead $R\cap \D^+\neq \emptyset$ and $R\cap \D^-\neq \emptyset$, then $\Phi(X)_R=0$.      
\end{lemma}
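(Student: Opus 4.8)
The plan is to reduce both assertions to two structural facts about the surface $\Phi=\Phi(X)$ attached to the legal cluster $X$. First, by admissibility $\Phi$ is constant on each face, so on the face $R$ I write $\Phi_R\equiv c_R$ for its common value. Second, legality means exactly that $\Phi\in\O_\L^0(\D^\pm)$, i.e.\ $\Phi_{\D^+}\ge 0$ and $\Phi_{\D^-}\le 0$. The only geometric input is the adjacency observation recorded after Definition \ref{contourdef}, applied to the closed contour $\g'$: every point of $\D^+_{\g'}$ has a point of $\D^-_{\g'}$ at distance $1$, or at distance $\sqrt 2$ in the South-West or North-East direction, and, by the evident symmetry between the two sides of a contour, the same holds with $\D^+_{\g'}$ and $\D^-_{\g'}$ interchanged.

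The last assertion is immediate. If $R\cap\D^+\ne\emptyset$ and $R\cap\D^-\ne\emptyset$, pick $x_+\in R\cap\D^+$ and $x_-\in R\cap\D^-$; since $\Phi$ is constant on $R$, legality gives $c_R=\Phi(x_+)\ge 0$ and $c_R=\Phi(x_-)\le 0$, hence $\Phi(X)_R=c_R=0$.

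For the first assertion I would argue as follows. Assume $R\cap\D^+\ne\emptyset$, $R\cap\D^-=\emptyset$, and fix $x\in R\cap\D^+$. Since $\D^+=\D^+_{\g'}\cap\L$ we have $x\in\D^+_{\g'}$, so the observation produces $y\in\D^-_{\g'}$ that is either a nearest neighbor of $x$ or at distance $\sqrt 2$ from $x$ in the SW/NE direction. The key step is to check $y\notin R$. If $y\in\L$ then $y\in\D^-_{\g'}\cap\L=\D^-$, so $y\in R$ would give $y\in R\cap\D^-=\emptyset$, a contradiction; if instead $y\notin\L$ then $y\notin R$ because the faces of $X$ are the bounded components of $\bbR^2\setminus\G$ while $\G$ consists of dual bonds crossing edges of $\cB_\L$, so each face encloses only sites of $\L$, whereas $y$ lies in the unbounded component on which $\Phi\equiv 0$. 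Thus $y\in\bbZ^2\setminus R$ witnesses that $x$ has a neighbor outside $R$ at distance $1$, or at distance $\sqrt 2$ in the SW/NE direction, which is precisely the defining property of $x\in\partial_{*}R$. Since $x$ was arbitrary this gives $R\cap\D^+\subset\partial_{*}R$, and the statement with $\D^+,\D^-$ interchanged follows verbatim from the symmetric form of the observation.

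The step I expect to require the most care is not any computation but the matching of conventions in this last argument: one must be sure that the distance-$\sqrt 2$ adjacencies supplied by the observation occur only along the South-West/North-East diagonal (this is forced by the $45^\circ$-line convention in the definition of linked pairs, and is exactly the diagonal permitted in the definition of $\partial_{*}$), and that the faces of a cluster over $\O_\L^0$ cannot reach outside $\L$. Both are geometric bookkeeping facts inherited from Definition \ref{contourdef} and from $\Phi$ vanishing off $\L$; once they are in place the two inequalities and the single adjacency step above close the proof.
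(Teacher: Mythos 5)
Your proof is correct and follows essentially the same route as the paper's: constancy of $\Phi(X)$ on the face $R$ plus legality settles the case $R\cap\D^+\neq\emptyset$, $R\cap\D^-\neq\emptyset$ at once, and the adjacency observation after Definition \ref{contourdef} supplies a companion point $y$ which cannot lie in $R$, forcing $x\in\partial_*R$. The only difference is that you explicitly verify $y\notin R$ also when $y\notin\L$ (using that faces of a legal cluster contain only sites of $\L$), a case the paper's proof glosses over by asserting the companion lies in $\D^-$ rather than merely in $\D^-_{\g'}$ — so your write-up is, if anything, slightly more careful on the same argument.
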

\begin{proof}[Proof of Lemma \ref{lem:ce1}]
Fix a legal cluster $X$ together with one of its faces $R$. By construction, the surface height $\Phi(X)$ is constant on $R$. Suppose first that there exist $x,y\in R$ such that $x\in \D^+$ and $y\in \D^-$. Then $\Phi(X)_R=0$ since it is non-negative at $x$ and non-positive at $y$. Assume now that $R\cap \D^+\neq \emptyset,\ R\cap \D^-=\emptyset$ and let $x\in R\cap \D^+$. By construction, (cf.\ Definition \ref{contourdef}) there exists a point $y\in \D^-$ which is either a nearest neighbor of $x$ or is at distance $\sqrt{2}$ from $x$ in the South-West direction. By assumption $y\notin R$ and therefore $x\in \partial_* R$.   The same argument applies with $\D^+,\D^-$ interchanged.
\end{proof}

\begin{lemma}
\label{lem:ce2}
The set of legal surfaces in \eqref{zod} is in one-to-one correspondence with the set of all compatible configurations of  legal clusters $\{X_1,X_2,\dots, X_n\}$.    In particular, \eqref{zod} can be written as
\begin{equation}\label{zod1}
Z^0_{\L,\D^+,\D^-}=\sum_{\{X_1,\dots,,X_n\}}\,\prod_{i=1}^n
\exp\left[-\b \sum_{b\in\G_i}h_{b^\perp}\left(|\grad^{(i)}_{b^\perp}|\right)\right],
\end{equation}
where $\G_i$ denotes the support of
$X_i$, $\grad^{(i)}_{b^\perp}$ is the height gradient in $X_i$ associated to $b^\perp $, the lattice bond orthogonal to bond $b\in\G_i$, and the first sum runs over all compatible configurations of legal clusters $\{X_1,X_2,\dots, X_n\}$. 
\end{lemma}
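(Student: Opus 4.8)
The plan is to obtain the statement by \emph{restricting} the unconstrained cluster representation of \cite{BW} to legal surfaces and then checking that the sign constraints encoded by $\D^\pm$ factorize over clusters. Recall that \cite{BW} puts the surfaces $\phi\in\O_\L^0$ in bijection with the compatible families of admissible clusters $\{X_1,\dots,X_n\}$, the surface being recovered additively as $\phi=\sum_{i=1}^n\Phi(X_i)$; this is consistent precisely because the supports $\G_i$ are pairwise disjoint, so the gradient of $\phi$ along any lattice bond $b^\perp$ is nonzero for at most one index $i$ and then equals $\grad^{(i)}_{b^\perp}$. The first step is to record the corresponding factorization of the energy in \eqref{zod}: since $h_{xy}(0)=0$, the sum $\sum_{xy\in\cB_\L}h_{xy}(|\phi(x)-\phi(y)|)$ collapses to $\sum_{i=1}^n\sum_{b\in\G_i}h_{b^\perp}(|\grad^{(i)}_{b^\perp}|)$. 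Granting that legal surfaces correspond exactly to families of legal clusters, summing this factorized weight over all such compatible families yields \eqref{zod1}.

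It therefore remains to prove the set-theoretic identity: under the bijection of \cite{BW}, the surface $\phi$ is legal if and only if every cluster $X_i$ is legal. One implication is immediate from additivity: if $\Phi(X_i)_{\D^+}\geq 0$ and $\Phi(X_i)_{\D^-}\leq 0$ for all $i$, the same bounds hold for $\phi=\sum_i\Phi(X_i)$. The converse is the crux, and the step I expect to be the main obstacle: a priori the additive reconstruction would allow a cluster with a negative face meeting $\D^+$ to be compensated by a nested cluster of opposite sign, so that $\phi$ obeys the constraint while some individual $\Phi(X_i)$ violates it.

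To rule this out I would exploit the precise geometry of $\D^\pm$, namely that they are the two one-site-thick layers flanking the connected contour $\g$, together with the pairing recorded in Definition \ref{contourdef}: every $x\in\D^+$ admits a partner $y\in\D^-$ at distance $1$ or $\sqrt2$, and the dual bond separating them lies on $\g$. Because the supports are disjoint, the increment $\phi(x)-\phi(y)$ is carried by the single cluster crossing that bond, all other clusters taking equal values at $x$ and $y$; legality of $\phi$ then forces this single increment to be nonnegative, i.e.\ each cluster crosses $\g$ with the correct orientation. The delicate part is to \emph{upgrade} this control of the increments at the crossings to the sign of $\Phi(X_i)$ on the whole of $\D^\pm$, and here Lemma \ref{lem:ce1} is the key input: it confines the intersection of any face $R$ of a legal cluster with $\D^\pm$ to its inner boundary $\partial_* R$, or forces $\Phi(X_i)_R=0$. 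I would organize the argument as an induction on the nesting order of the clusters, peeling them from the outside inward: for the outermost clusters the orientation-at-crossings observation already pins the sign of $\Phi(X_i)$ along $\D^+$, starting from the endpoints of $\g$ on $\partial\L$ where $\Phi(X_i)$ vanishes; once a cluster is shown legal, Lemma \ref{lem:ce1} localizes where its support may meet $\D^\pm$, which in turn lets the same tracing argument proceed for the clusters nested inside it. This closes the converse and hence the bijection, after which \eqref{zod1} follows as above, the case $p=\infty$ being recovered in the limit $p\to\infty$.
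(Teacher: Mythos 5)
Your reduction of \eqref{zod1} to the set-theoretic claim is sound: the energy factorization is immediate since $h_{xy}(0)=0$, and the additivity identity $\phi=\sum_i\Phi(X_i)$ does give the direction ``all clusters legal $\Rightarrow$ $\phi$ legal'' (in fact more cleanly than the paper, which proves even this direction by induction on $n$). The genuine gap is in the converse, exactly where you locate the crux, and the mechanism you propose there cannot carry the proof. Knowing that each cluster ``crosses $\g$ with the correct orientation'' controls only gradients across bonds of $\g$; it says nothing about the \emph{values} of $\Phi(X_i)$ on $\D^\pm$, because the value on a face is the sum of gradients along a path from the exterior, such a path need not cross $\g$ at all, and the gradients of $X_i$ between adjacent points of $\D^+$ (what your trace along $\D^+$ would accumulate) are not sign-constrained by the legality of $\phi$. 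Worse, a cluster can be non-legal with its support entirely disjoint from $\g$: take $\G_1$ a large rectangular circuit whose single face $R$ contains a stretch of $\g$ together with flanking points of both $\D^+$ and $\D^-$, with gradient $+1$ inward, so $\Phi(X_1)\equiv 1$ on $R$; this violates the constraint on $\D^-\cap R$, yet your orientation-at-crossings observation is vacuous for it. Your proposed starting point is also shaky: clusters may enclose the lattice points flanking the endpoints of $\g$ (dual bonds of boundary edges of $\cB_\L$ are available to supports), so $\Phi(X_i)$ need not vanish there.

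What actually rules out such clusters, and the compensation scenario in general, is a value-pinning argument that your sketch does not contain. If a face $R$ of a cluster meets $\D^-$ (say), then $\D^-\cap R$ always contains points of $\partial_* R$: either points adjacent to the bounding support (where $\g$ crosses into $R$), or points adjacent to $\partial\L$ (where $\g$ enters $\L$, in which case the bounding support must contain the corresponding boundary dual bond). No cluster nested inside $R$ can contain such an exposed point in its interior, since enclosing it would require either a dual bond already belonging to the outer support or enclosing a neighbor lying outside $R$, both forbidden by compatibility. Hence at exposed points $\phi$ equals the (sum of) face values of the surrounding clusters, and legality of $\phi$ pins those values directly; in the example above $\phi=1>0$ at the $\D^-$ point next to the left side of $\G_1$, so that cluster simply cannot occur in a legal surface. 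This is precisely how the paper's induction on the number of clusters works: the geometric half of Lemma \ref{lem:ce1} (which, note, does not require legality of the cluster) gives the $\partial_*R$ confinement when $R$ meets only one of $\D^\pm$, and the case where $R$ meets both is handled by the observation that $\g$ runs across $R$ from boundary to boundary, so exposed points of both signs exist and force the face value to be zero --- the paper's ``we cannot lower the value of $\Phi$ everywhere inside $R$ because $X_n$ is compatible with $X_{n-1}$.'' Your induction on nesting order could be repaired by substituting this argument for the increment/tracing step, but as written the decisive step is missing.
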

\begin{proof}[Proof of Lemma \ref{lem:ce2}]
We begin with the proof that any  compatible configuration of legal clusters defines a legal surface. We proceed by induction. If $n=1$, it is true by definition. Suppose the statement true up to $n-1$ and fix a  compatible configuration of  legal clusters $\{X_1,X_2,\dots, X_n\}$. Without loss of generality, possibly by a suitable relabeling of the clusters, we can assume that either ${\rm Int}(X_n)\not\subset \cup_{j=1}^{n-1}{\rm Int}(X_j)$ or ${\rm Int}(X_n)\subset {\rm Int}(X_{n-1})$ and ${\rm Int}(X_n)\cap\, \cup_{j=1}^{n-1}\G_j=\emptyset$. Call $\Phi$ the surface defined by $\{X_1,X_2,\dots, X_{n-1}\}$ and $\Phi'$ that obtained after the insertion of $X_n$ and whose existence is guaranteed by the results in \cite{BW}. In the first case there is nothing to prove, since by the inductive assumption $\Phi$ is legal and the addition of $X_n$ outside of $\cup_{j=1}^{n-1}{\rm Int}(X_j)$ creates a legal $\Phi'$. In the second case let $R$ be the face of $X_{n-1}$ containing $X_n$. 
\begin{itemize}
\item If $R\cap \D^\pm=\emptyset$ then the addition of $X_n$ to $\Phi$ does not change the values of $\Phi$ on $\D^+\cup\D^-$ and $\Phi'$ is also legal.
\item If $R\cap \D^+\neq \emptyset$ and $R\cap \D^- =\emptyset$ then, by Lemma \ref{lem:ce1}, $\D^+\subset \partial_*R$ and therefore $\D^\pm\cap {\rm Int}(X_n)=\emptyset$. Hence we conclude as in case (i). The same applies by inverting the role of $\D^+$ and $\D^-$.
\item If $R\cap \D^+\neq \emptyset$ and $R\cap \D^-\neq \emptyset$ then, by Lemma \ref{lem:ce1},  $\Phi$ must be equal to zero in $R$ and therefore we can safely add $X_n$ to $\Phi$.
\end{itemize}
We now prove the converse. Using \cite{BW}
we know that any legal surface corresponds to a unique  compatible configuration of admissible clusters $\{X_1,X_2,\dots, X_n\}$. It remains to check that each $X_i$ is legal. 

By induction we can assume the statement true for any legal surface
corresponding to $(n-1)$ admissible and compatible clusters. As before
we can assume that either (a) ${\rm Int}(X_n)\not\subset
\cup_{j=1}^{n-1}{\rm Int}(X_j)$ or (b) ${\rm Int}(X_n)\subset {\rm Int}(X_{n-1})$ and ${\rm Int}(X_n)\cap\, \cup_{j=1}^{n-1}\G_j=\emptyset$. 
Let $\Phi$ be the surface defined by the first $n-1$ clusters. We have to distinguish between two cases. 
\begin{enumerate}
\item $\Phi$ is legal. In this case by induction each $X_j$, $j\le
  n-1$, is legal. In case (a) necessarily $X_n$ must be legal. In
  case (b) let $R$
  be the face of $X_{n-1}$ containing ${\rm Int}(X_n)$. If $R\cap \D^\pm=\emptyset$ then $X_n$ is legal since it is admissible. If $R\cap \D^+\neq \emptyset$ and $R\cap \D^- =\emptyset$ then $\D^+\subset \partial_*R$ and therefore $\D^\pm\cap {\rm Int}(X_n)=\emptyset$. Hence we conclude as before. The same applies by inverting the role of $\D^+$ and $\D^-$.
If $R\cap \D^+\neq \emptyset$ and $R\cap \D^-\neq \emptyset$ then, using Lemma \ref{lem:ce1}, $\Phi$ must be equal to zero in $R$. Therefore, since the addition of $X_n$ to the flat surface at height zero produces a legal surface, $X_n$ must be legal.
\item $\Phi$ is not legal, but adding an admissible $X_n$ produces a legal
surface. We shall prove that this scenario cannot happen. In case (a)  adding an admissible $X_n$ cannot produce a legal
surface. In case (b) let $R$ be the face of $X_{n-1}$ containing $X_n$. We can safely assume that $\Phi$ violates the constraints ($\D^\pm$) inside $R$, since otherwise $X_n$ will not be able to restore them.
If $R\cap \D^+\neq \emptyset$ and $R\cap \D^-=\emptyset$ or viceversa then, as before, by adding $X_n$ we cannot change the value of the surface on $\D^+\cup \D^-$ and we get a non legal surface. Suppose now that $R$ contains points of $\D^+$ and of $\D^-$ and assume that e.g. $\Phi=h>0$ inside $R$. By the nature of the open contour producing $\D^\pm$, even if we add the cluster $X_n$ we cannot lower the value of $\Phi$ everywhere inside $R$ because $X_n$ is compatible with $X_{n-1}$. Thus the surface obtained by adding $X_n$ cannot be legal. 
\end{enumerate}
This establishes the claimed one-to-one correspondence. The expression \eqref{zod1} then follows by definition of compatible legal clusters. This ends the proof of Lemma \ref{lem:ce2}.
\end{proof}

\subsection{Surface tension}\label{surface_tension}
The expansion of Proposition \ref{lem:dks} is a version of the well known cluster expansion for 2D Ising  contours; see e.g.\ \cite{DKS}. In particular, it allows one to deduce a number of facts about 
the model, including the existence and basic properties of the surface tension, a.k.a.\ step free energy in this setting. Consider the partition function $Z(a; b; L,M)$ defined in \eqref{zetod}, and recall that $Z_\L$ denotes the partition function with zero boundary conditions and no sign constrains. One can show that the limit 
\begin{align}\label{zst}
\cZ(a; b; L)=\lim_{M\to\infty}\frac{Z(a; b; L,M)}{Z_\L} 
\end{align}
exists for every $L$, and fixed $a,b\in\bbZ$, and satisfies
\begin{align}\label{contourmod}
 &\cZ(a;b;L) =
\sum_{\g}e^{-\b|\g| + 
 \Phi_{L,\infty}(\g)}\,,
\end{align}
where the sum ranges over all open contours in the strip $\L_{L,\infty}=[-L,L]\times(-\infty,\infty)\cap\bbZ^2$ 
joining the dual lattice points
$x:=(-L-1/2,a-1/2)$ and $ y:=(L+1/2,b-1/2)$, and $ \Phi_{L,\infty}(\g)$ is the usual ``decoration" term; see e.g.\ \cite{DKS}. Moreover, one can prove the following facts; we refer to \cite{CMT} and references therein for more details. 
 \begin{lemma}\label{lem:surftens}
There exists $\b_0>0$ such that the following holds for all $\b\geq \b_0$, $p\in[1,\infty]$. 
Assume that as $L\to\infty$ one has $(b-a)/(2L)\to \l\in\bbR$ and set $\theta=\tan^{-1}(\l)$.   Then, the limit
\begin{equation}
    \label{eq:surftens1}
\t_{p,\b}(\theta)=-\lim_{L\to \infty}\frac{\cos(\theta)}{2\b L}\log \cZ(a;b;   L),
  \end{equation}
 is well defined and positive in $(-\pi/2,\pi/2)$. 
 \end{lemma}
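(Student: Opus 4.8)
The plan is to follow the classical contour analysis of \cite{DKS} and its adaptation in \cite{CMT}, the only genuinely new ingredient being the cluster expansion of Proposition \ref{lem:dks}, now available for every $p\in[1,\infty]$. The starting point is the representation \eqref{contourmod}: writing $q(\g)=e^{-\b|\g|+\Phi_{L,\infty}(\g)}$, one must identify the exponential decay rate of $\cZ(a;b;L)=\sum_\g q(\g)$. By Proposition \ref{lem:dks} the decoration $\Phi_{L,\infty}(\g)$ is a sum of cluster potentials attached to $\g$, and item (3) gives the uniform bound $|\Phi_{L,\infty}(\g)|\leq \e(\b)\,|\g|$ with $\e(\b)=\sum_{V\ni 0}e^{-(\b-\b_0)d(V)}\to 0$ as $\b\to\infty$, uniformly in $p$. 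The same exponential decay yields the crucial \emph{decoupling} property: if $\g$ is split at a dual point into $\g=\g_1\circ\g_2$, then $\Phi_{L,\infty}(\g)=\Phi_{L,\infty}(\g_1)+\Phi_{L,\infty}(\g_2)+O(1)$, the error coming only from clusters straddling the junction, whose total weight is a convergent sum bounded uniformly in $L$.

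First I would establish positivity and finiteness, which do not require existence of the limit. Any $\g$ contributing to $\cZ(a;b;L)$ must cross the strip, so $|\g|\geq 2L+1$; combining the decoration bound with the standard entropy estimate that the number of contours of length $n$ through a fixed bond is at most $C\mu^n$, one gets, for $\b$ large enough that $\mu\,e^{-(\b-\e(\b))}<1$,
\[
\cZ(a;b;L)\leq \sum_{n\geq 2L+1}C\mu^n e^{-(\b-\e(\b))n}\leq C'\,e^{-(\b-\e(\b)-\log\mu)(2L+1)},
\]
so that $-\tfrac{\cos\theta}{2\b L}\log\cZ$ is bounded below by a positive constant. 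For finiteness one exhibits a single ``staircase'' contour $\g_0$ approximating the segment from $x$ to $y$, with $|\g_0|=(2L+1)+|b-a|+O(1)$; since $|b-a|=2L|\l|(1+o(1))$ this yields a matching upper bound, keeping the quantity in $(0,\infty)$.

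The heart of the matter is the existence of the limit. Here I would use the shift-invariance of the bulk potential $\varphi_0$ (Proposition \ref{lem:dks}(2)) to pass from the strip quantity to the translation-invariant full-plane point-to-point partition function $Q(u,v)=\sum_{\g:u\to v}e^{-\b|\g|+\Phi_\infty(\g)}$, which depends only on the displacement $w=v-u$; a separate estimate shows that confining contours to the strip and replacing $\Phi_\infty$ by $\Phi_{L,\infty}$ alters $\log\cZ$ only by $o(L)$, hence does not change the rate. Writing $G(w)=\log Q(0,w)$, concatenation together with the decoupling above gives the approximate superadditivity $G(w+w')\geq G(w)+G(w')-C$ under vector addition of displacements. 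Fekete's lemma, in its almost-superadditive form applied along the ray $w=n\,w_\theta$ with $w_\theta=(\cos\theta,\sin\theta)$, then produces $\lim_n G(nw_\theta)/n$; since $\cos\theta/(2L)=1/|w|$ for the relevant $w\simeq 2L(1,\l)$, the normalization in \eqref{eq:surftens1} is exactly free energy per unit Euclidean length of the interface, and unwinding it gives $\t_{p,\b}(\theta)=-\beta^{-1}\lim G(w)/|w|$. The bounds of the previous paragraph force $\t_{p,\b}(\theta)\in(0,\infty)$ for $\theta\in(-\pi/2,\pi/2)$, while independence from the approximating sequence and continuity/convexity in $\theta$ follow from the same subadditive homogenization structure.

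The step I expect to be the main obstacle is the decoupling estimate for $\Phi_{L,\infty}$ under concatenation, i.e.\ controlling \emph{uniformly in $L$} both the clusters straddling the junction and those living near the vertical boundaries of the strip. Both are tamed by the exponential decay in Proposition \ref{lem:dks}(3): the contour meets the strip boundary only at its two endpoints, so the boundary correction is $O(1)$ rather than $O(L)$, and the junction correction is a convergent sum of cluster weights. Making this quantitative, and verifying its insensitivity to $p$ (guaranteed because $\e(\b)$ and the cluster decay in Proposition \ref{lem:dks} are uniform in $p$), is where the real work lies, but it runs entirely parallel to the $p=1$ treatment of \cite{CMT} once Proposition \ref{lem:dks} is in hand.
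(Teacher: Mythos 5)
Your overall route --- the representation \eqref{contourmod}, the decoration bound $|\Phi_{L,\infty}(\g)|\leq \e(\b)|\g|$ from Proposition \ref{lem:dks}(3), Peierls counting for positivity, a staircase test contour for finiteness, and approximate superadditivity plus Fekete for existence --- is the standard DKS-type argument that the paper itself invokes: the paper gives no self-contained proof of Lemma \ref{lem:surftens}, but rather establishes Proposition \ref{lem:dks} uniformly in $p$ and defers the rest to \cite{CMT} and references therein. Your positivity and finiteness estimates are correct as written.

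The genuine gap is in the existence step, in the two claims you use to get almost-superadditivity with a \emph{uniform} constant. First, the decoupling estimate $\Phi_{L,\infty}(\g_1\circ\g_2)=\Phi_{L,\infty}(\g_1)+\Phi_{L,\infty}(\g_2)+O(1)$ is false as stated: the error is a sum over clusters touching both pieces, and since $\g_1$ and $\g_2$ may each run along the junction column for many steps, small clusters can straddle at every such step, so the error is only bounded by $\e(\b)\min(|\g_1|,|\g_2|)$, not by a constant. This is not a negligible event one can wave away: for a tilted direction ($\l\neq 0$) a crossing contour must make $|b-a|\sim 2L|\l|$ vertical steps anyway, and it can make them all at the junction column at no extra cost in length, so such pairs carry a non-negligible share of the partition function; with an error of order $\e(\b)L$ Fekete's lemma no longer yields existence of the limit, only upper and lower bounds on the rate differing by $O(\e(\b)/\b)$. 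Second, in the full-plane point-to-point function $Q(u,v)$, two contours $\g_1:0\to w$ and $\g_2:w\to w+w'$ can intersect or even share bonds, in which case the concatenation violates Definition \ref{contourdef} and is not an admissible contour, so the injection from pairs to glued contours underlying $G(w+w')\geq G(w)+G(w')-C$ is not defined; confining to side-by-side strips cures the intersection problem but not the hugging problem. Repairing this requires either a surgery argument (showing that restricting to contours with $O(1)$ bonds near the junction column only costs a constant factor in the partition function) or the irreducible-decomposition/renewal machinery of DKS; that is exactly the technical content the paper outsources to its references, and without it your Fekete step is not justified.
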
 
Equation \eqref{eq:surftens1} defines the  \emph{surface tension} at angle $\theta$. When $\theta=0$ we simply write $ \t_{p,\b}=\t_{p,\b}(0)$. We refer to \cite[Lemma 2.4]{CMT} for further important properties of this function that continue to hold in our setting.  

\section{Lower bound}\label{sec:lower}
As in  \cite[Lemma 3.1]{CMT} it is not difficult to show that for $\b\geq \b_0$ the infinite volume measure $\bbP$ satisfies \begin{align}\label{pospo1}
\bbP({\phi_{\partial\L_{L}}}=0)\geq e^{-c\, L},
\end{align} for some constant $c>0$.
In particular, since $H_p(L)\to\infty$ as $L\to\infty$, it follows that if  we prove the lower bound for $\bbP_{\L_L}$ in Theorem \ref{th:main},  
we also have the same lower bound for $\bbP$ by using 
\begin{align}\label{posp1}
\bbP( \phi_{\L_L}\geq 0) \geq  \bbP({\phi_{\partial\L_{L}}}=0;\phi_{\L_L}\geq 0) =  \bbP({\phi_{\partial\L_{L}}}=0)\bbP_{\L_L} ( \phi_{\L_L}\geq 0).
\end{align} 
To prove the lower bound for $\bbP_{\L_L}$ we proceed by restricting the set of configurations to a suitable 
event. As in \cite{CMT}, to define this event we need to introduce the following geometric construction and the associated set of nested contours. 

Fix $ N:=H_p(L)$, and
define the nested annular regions 
$\bar \cU_i:=\L_{L-3\ell_{i-1}}\setminus\L_{L-3\ell_i}$, $i=1,\dots, N$, where $\ell_0=0$ and $\ell_i=i(i+1)/2$. The set $\bar \cU_i$ consists of 3 nested disjoint annuli each of width $i$. We call $\cU_i$  the middle one, i.e.\ $\cU_i =  \L_{L-(3\ell_{i-1} -i)}\setminus\L_{L-(3\ell_i+i)}$. Notice that $d(\cU_i,\cU_{i+1})\geq 2i +1$, where $d(\cdot,\cdot)$ is the euclidean distance. 

For each $i$, let $\cC_i$ denote the set of all closed contours $\g$ 
such that $\g\subset\cU_i$ and $\g$ surrounds $\L_{L-(3\ell_i+i)}$. 
For fixed $\g\in\cC_i$, let $\sC_{\g,i}$ denote the event that the surface $\phi$ has an $i$-contour at $\g$. 
Define the event $E$ that for all $i=1,\dots,N$ there exists an $i$-contour $\g_i\in\cC_i$, that is
\begin{align}\label{eveE}
E := \cup_{\g_1\in\cC_1,\dots,\g_N\in\cC_N}\,\sC_{\g_1,1}\cap\cdots\cap\sC_{\g_N,N}.
\end{align} 
Fix a choice of  contours $\g_i\in \cC_i$, $i=1,\dots, N$, and consider the probability measure $\wt\bbP_{\L_L}=\wt\bbP_{\L_L,\g_1,\ldots,\g_N}$ 
defined as follows. Define the modified hamiltonian
\begin{align}\label{tsos_ham}
\wt\cH_{\L_L,\g_1,\ldots,\g_N} (\phi) = \sum_{xy\in\cB_\L} h_{xy}(|\phi(x)-\phi(y)|) \,,
\end{align}
where 
$h_{xy}(a)=(1+a)^p-1$ if $xy$ crosses a dual bond $e\in\cup_{i=1}^N\g_i$, and  
$h_{xy}(a)=a^p$ otherwise.  Then define  
\begin{align}\label{tsos_part}
\wt Z_{\L_L,\g_1,\ldots,\g_N}=\sum_{\phi\in\O^0_{\L_L,\g_1,\ldots,\g_N}}e^{-\b\wt\cH_{\L_L,\g_1,\ldots,\g_N} (\phi)}  \,,
\end{align}
where $\O^0_{\L_L,\g_1,\ldots,\g_N}$ denotes the set of all $\phi\in\O^0_{\L_L}$ such that $\phi_{U^+}\geq 0$ and $\phi_{U^-}\leq 0$, where $U^\pm:=\cup_{i=1}^N\D_{\g_i}^\pm$. The probability $\wt\bbP_{\L_L}$ on $\O^0_{\L_L,\g_1,\ldots,\g_N}$ is defined by
 \begin{align}\label{tsos_prob}
\wt\bbP_{\L_L}(\phi) = \frac{e^{-\b\wt\cH_{\L_L,\g_1,\ldots,\g_N} (\phi)}}{\wt Z_{\L_L,\g_1,\ldots,\g_N}}.\end{align}
 Define also $S_i:=\L_{\g_{i-1}}\setminus \L_{\g_i}$, with the convention that $\L_{\g_0}=\L_L$ and $\L_{\g_{N+1}}=\emptyset$.
With this notation, the change of variable $\phi(x)\mapsto\phi(x)-i+1$, for each $x\in S_i$, yields
 \begin{align}\label{elbo4}
&\bbP_{\L_L}( \phi_{\L_L}\geq 0\,;\,\sC_{\g_{1},1}\cap\cdots\cap\sC_{\g_N,N})
\nonumber\\&\qquad =e^{-\b\sum_{i=1}^N|\g_i|}\,\frac{\wt Z_{\L_L,\g_1,\ldots,\g_N}}{Z_{\L_L}}\,\wt\bbP_{\L_L}(\cap_{i=1}^{N+1}\{\phi_{S_i}\geq -i+1\}),
\end{align}
where $\phi_{A}\geq a$ is shorthand notation for: $\phi(x)\geq a$ for all $x\in A$. 

We start by estimating the probability appearing in the right hand side of \eqref{elbo4}. 
\begin{lemma}\label{lemelbo} 
There exists a constant $C>0$ such that for any choice of contours $\g_i\in\cC_i$, $i=1,\dots,N$, one has  
\begin{align}\label{lb51}
\wt\bbP_{\L_L}(\cap_{i=1}^{N+1}\{\phi_{S_i}\geq -i+1\})\geq  e^{-C L}
\end{align}
\end{lemma}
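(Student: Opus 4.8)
The plan is to reduce the estimate \eqref{lb51} to a product of one-point bounds by monotonicity, and to control each one-point probability through the cluster expansion of Proposition~\ref{lem:dks}. Since the shells $S_1,\dots,S_{N+1}$ partition $\L_L$, the event in \eqref{lb51} is the intersection over all $x\in\L_L$ of the increasing events $\{\phi(x)\geq f(x)\}$, where $f$ is the ``staircase floor'' given by $f(x)=-i+1$ for $x\in S_i$; note that $f\leq 0$, with $f=0$ only on the outermost shell $S_1$.

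First I would verify that $\wt\bbP_{\L_L}$ is an FKG measure. The single-bond weights $a\mapsto h_{xy}(a)$, equal to $a^p$ or to $(1+a)^p-1$ across the contours, are convex for every $p\in[1,\infty]$, so each interaction $(\phi(x),\phi(y))\mapsto h_{xy}(|\phi(x)-\phi(y)|)$ is submodular and the modified measure without constraints satisfies the FKG lattice condition. The constraint region $\{\phi_{U^+}\geq 0,\ \phi_{U^-}\leq 0\}$ is a sublattice of $\O^0_{\L_L}$, being stable under coordinatewise maximum and minimum, and restriction of a log-supermodular measure to a sublattice preserves the lattice condition; hence $\wt\bbP_{\L_L}$ is FKG. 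Applying FKG to the increasing events $\{\phi(x)\geq f(x)\}$ gives
\[
\wt\bbP_{\L_L}\Big(\cap_{i=1}^{N+1}\{\phi_{S_i}\geq -i+1\}\Big)\ \geq\ \prod_{x\in\L_L}\wt\bbP_{\L_L}\big(\phi(x)\geq f(x)\big).
\]

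The central estimate is then the one-point tail bound
\[
\wt\bbP_{\L_L}\big(\phi(x)\leq -i\big)\ \leq\ C\,\bbP\big(\phi(0)\geq i\big),\qquad x\in S_i,
\]
valid with $C$ and $\b_0$ independent of $p$, of $\L$, and of the chosen contours $\g_1,\dots,\g_N$. This is exactly where the extension of the cluster expansion to all $p\geq1$ is needed. After the change of variables the field in each shell is centred at $0$, and a deviation $\phi(x)\leq -i$ forces $x$ to be surrounded by at least $i$ nested negative level lines; the decay estimate of Proposition~\ref{lem:dks}(3), together with the $\phi\mapsto-\phi$ symmetry of the infinite-volume measure, bounds this by $\bbP(\phi(0)\geq i)$ up to a constant. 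The sign constraints $\D^\pm$ and the contour defects only decrease the probability (indeed the left side vanishes when $x\in U^+$), so the bound is uniform. Establishing this comparison uniformly in $p$ and in the contour/constraint data is the main obstacle, and it rests on the fact that $\b_0$ in Proposition~\ref{lem:dks} does not depend on these objects.

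Granting the tail bound, the proof is completed by summation, and here the only delicate point is the innermost region. For $\b\geq\b_0$ one has $C\,\bbP(\phi(0)\geq i)\leq\tfrac12$, so $\wt\bbP_{\L_L}(\phi(x)\geq f(x))\geq 1-C\bbP(\phi(0)\geq i)\geq e^{-2C\bbP(\phi(0)\geq i)}$ for $x\in S_i$, whence
\[
\prod_{x\in\L_L}\wt\bbP_{\L_L}\big(\phi(x)\geq f(x)\big)\ \geq\ \exp\Big(-2C\sum_{i=1}^{N+1}|S_i|\,\bbP(\phi(0)\geq i)\Big).
\]
For $i\leq N$ the shell $S_i=\L_{\g_{i-1}}\setminus\L_{\g_i}$ has width $O(i)$ and perimeter $O(L)$, so $|S_i|\leq CiL$ and $\sum_{i\leq N}|S_i|\bbP(\phi(0)\geq i)\leq CL\sum_{i\geq1} i\,\bbP(\phi(0)\geq i)=O(L)$, the series converging because of the fast decay of the tail. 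For the interior shell $S_{N+1}=\L_{\g_N}$, whose area is of order $L^2$, one uses instead the very definition $N=H_p(L)$: since $N+1$ exceeds $H_p(L)$ one has $\bbP(\phi(0)\geq N+1)< 5\b/L$, so $|S_{N+1}|\,\bbP(\phi(0)\geq N+1)\leq CL^2\cdot(5\b/L)=O(L)$. Adding the two contributions gives $\sum_i|S_i|\bbP(\phi(0)\geq i)=O(L)$ uniformly in the $\g_i$, which yields \eqref{lb51} and completes the proof.
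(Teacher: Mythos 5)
Your proposal is correct and follows essentially the same route as the paper: verify the FKG lattice condition for $\wt\bbP_{\L_L}$ via convexity of $h_{xy}(|\cdot|)$, factor the staircase event into one-point probabilities, compare each one-point tail with the infinite-volume tail $\bbP(\phi(0)\geq i)$, and then sum using $|S_i|\leq CiL$ for $i\leq N$ together with the definition \eqref{eq-H-def} of $H_p(L)$ to handle the innermost region $S_{N+1}$ of area $O(L^2)$. The only cosmetic difference is that where the paper cites an adaptation of \cite[Proposition 3.9]{CLMST} for the one-point bound $\wt\bbP_{\L_L}(\phi(x)\geq -i+1)\geq 1-C\,\bbP(\phi(0)\geq i)$, you sketch the underlying level-line/cluster-expansion argument yourself — correctly identifying it as the step where the $p\geq 1$ extension of Proposition \ref{lem:dks} is genuinely needed.
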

\begin{proof}
One verifies that the FKG lattice condition is satisfied by the measure  $\wt\bbP_{\L_L}$, that is $\wt\bbP_{\L_L}(\phi\vee\phi')\wt\bbP_{\L_L}(\phi\wedge\phi')\geq \wt\bbP_{\L_L}(\phi)\wt\bbP_{\L_L}(\phi')$ for all $\phi,\phi'\in\O^0_{\L_L,\g_1,\ldots,\g_N}$. Indeed, 
it suffices to show that 
\begin{align}\label{convo}
h_{xy}&(|\max\{a,c\}-\max\{b,d\}|)+ h_{xy}(|\min\{a,c\}-\min\{b,d\}|) \\&\qquad\qquad \qquad\leq h_{xy}(|a-b|) + h_{xy}(|c-d|)\,,
\end{align}
for all $a,b,c,d\in\bbR$, and for all $xy\in\cB_\L$. To prove \eqref{convo}, define the function $\bbR\ni t\mapsto \o(t):=h_{xy}(|t|)$, and note that
since $p\geq 1$, $\o$ is convex. Let us consider e.g.\ the case $a\geq c\geq b$, $c\leq d$, and $d\geq b$. All other cases can be handled in a similar way. 
The left hand side of \eqref{convo} becomes $\o(a-d)+\o(c-b)$. Setting $\a=(a-c)/(d-c+a-b)$, by 
convexity $\o(a-d)\leq\a\o(c-d)+(1-\a)\o(a-b)$, and $\o(c-b)\leq(1-\a)\o(c-d)+\a\o(a-b)$. Therefore, summing up one obtains $\o(a-d)+\o(c-b)\leq \o(c-d)+\o(a-b)$, as in \eqref{convo}. 
 
 Then, the FKG inequality implies 
\begin{align*}
\wt\bbP_{\L_L}(\cap_{i=1}^{N+1}\{\phi_{S_i}\geq -i+1\})\geq
\prod_{i=1}^{N+1}\prod_{x\in S_i}\wt\bbP_{\L_L}(\phi(x)\geq -i+1)\end{align*}
The argument from \cite[Proposition 3.9]{CLMST} can be adapted to the case $p>1$ 
to obtain 
\begin{align*}
\wt\bbP_{\L_L}(\phi(x)\geq -i+1)\geq 1-C\,\bbP(\phi(0)\geq i), 
\end{align*}
for some constant $C>0$ and for all $i$. 
Using $1-t\geq e^{-2t}$ for $0\leq t\leq 1/2$, one has
   \begin{align}\label{lbo51}
\wt\bbP_{\L_L}(\cap_{i=1}^{N+1}\{\phi_{S_i}\geq -i+1\})\geq
 e^{-C-2\sum_{i=1}^{N+1} C |S_i| \bbP(\phi(0)\geq i)},
\end{align} for some new constant $C>0$.
 Estimating $|S_i|\leq Ci L$ one finds 
 $\sum_{i=1}^{N} |S_i|\bbP(\phi(0)\geq i)\leq CL$.
 On the other hand, the term $|S_{N+1}|\bbP(\phi(0)\geq N+1)= |\L_{\g_N}|\bbP(\phi(0)\geq H_p(L)+1)$ satisfies
 \begin{align}\label{lb533}
|S_{N+1}|\bbP(\phi(0)\geq N+1)\leq 4L^2\bbP(\phi(0)\geq H_p(L)+1)\leq C\,\b\, L ,
 \end{align}
 where we use the definition \eqref{eq-H-def} of $H_p(L)$. This proves \eqref{lb51}.
\end{proof}

Next, we proceed as in \cite[Lemma 3.2]{CMT}, that is we define $n:=\lfloor\log \log L\rfloor$, fix arbitrary contours $\g^*_1\in\cC_1,\dots,\g^*_n\in\cC_n$, and then sum over  the remaining contours $\g_i\in\cC_i$, $i=n+1,\dots,N$. 
 \begin{lemma}\label{lemlb}
 Fix $\b\geq \b_0$ and fix  $\g^*_1\in\cC_1,\dots,\g^*_n\in\cC_n$, where $n=\lfloor\log \log L\rfloor$. Then
  \begin{align}\label{lemlb1}
 \bbP_{\L_L}( \phi_{\L_L}\geq 0; E) \geq\; \tfrac12
 \sum_{\g_{n+1}\in\cC_{n+1},\dots,\g_N\in\cC_N}
 \bbP_{\L_L}( \phi_{\L_L}\geq 0\,;\,\cap_{k=1}^n\sC_{\g^*_{k},k}\,;\,\cap_{j=n+1}^N\sC_{\g_{j},j}
 ).
 \end{align}
 \end{lemma}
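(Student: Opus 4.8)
The plan is to recast both sides of \eqref{lemlb1} as expectations of contour counts and then to show that, conditionally on the relevant event, the expected number of spurious contours is close to $1$. Set $F:=\cap_{k=1}^n\sC_{\g^*_k,k}$ and, for $j=n+1,\dots,N$, let $\cN_j:=\sum_{\g\in\cC_j}\1_{\sC_{\g,j}}$ be the number of $j$-contours of $\cC_j$ present in $\phi$. Summing the summand on the right of \eqref{lemlb1} over $\g_{n+1}\in\cC_{n+1},\dots,\g_N\in\cC_N$ and exchanging sum and expectation, the right-hand side equals $\tfrac12\,\bbE_{\L_L}[\1_{\{\phi_{\L_L}\geq0\}}\1_F\prod_{j=n+1}^N\cN_j]$. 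For the left-hand side I would note that, by \eqref{eveE}, the presence of $\g^*_1,\dots,\g^*_n$ together with at least one $j$-contour for each $j>n$ forces the event $E$; hence, writing $\cG:=\{\phi_{\L_L}\geq0\}\cap F\cap\{\cN_j\geq1\ \forall j>n\}$, one has $\bbP_{\L_L}(\phi_{\L_L}\geq0;E)\geq\bbP_{\L_L}(\cG)$. Since $\prod_{j>n}\cN_j$ vanishes off $\cG$, it therefore suffices to establish the conditional bound
\[
\bbE_{\L_L}\!\Big[\,\prod_{j=n+1}^N\cN_j\,\Big|\,\cG\,\Big]\ \leq\ 2 .
\]

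The first ingredient I would record is a geometric fact: two distinct contours of $\cC_j$ that are both $j$-contours for the same $\phi$ have interiors ordered by inclusion, because each surrounds $\L_{L-(3\ell_j+j)}$ and lies in the annulus $\cU_j$, while along each of them $\phi\geq j$ on the inner side and $\phi\leq j-1$ on the outer side. Consequently, on $\cG$ we may write $\cN_j=1+M_j$, where $M_j\geq0$ counts the extra nested $j$-contours. The crucial estimate is that each extra surrounding contour is exponentially costly. Indeed, a second nested $j$-contour $\g'$ forces the field to drop below level $j$ and to climb back, creating an additional level line which, since it encircles $\L_{L-(3\ell_j+j)}$ and $3\ell_j+j=O(H_p(L)^2)=o(L)$, has length at least $8L(1-o(1))$. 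By the change of variables and contour representation of \eqref{zetod}--\eqref{elbo4}, together with the cluster expansion of Proposition \ref{lem:dks} and the surface tension of Lemma \ref{lem:surftens}, the statistical weight carried by $\g'$ is suppressed by a factor $e^{-\b\t_{p,\b}|\g'|(1+o(1))}\leq e^{-cL}$; iterating over $k$ nested extra contours yields the uniform geometric tail $\bbP_{\L_L}(M_j\geq k\mid\cG)\leq(Ce^{-cL})^k$ for suitable $c,C>0$, uniformly in $j$ and in the choice of $\g^*_1,\dots,\g^*_n$.

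It then remains to combine these tails across the $N$ heights. Because the annuli $\cU_{n+1},\dots,\cU_N$ are pairwise disjoint and separated by distance at least $2i+1$, the events governing the $M_j$ are supported in well-separated regions; using the FKG inequality verified in the proof of Lemma \ref{lemelbo} and the exponential decay of the potentials in part (3) of Proposition \ref{lem:dks}, the residual dependence across heights is controlled and the geometric tails give $\bbE_{\L_L}[1+M_j\mid\cG]\leq1+Ce^{-cL}$ uniformly in $j$. Hence
\[
\bbE_{\L_L}\!\Big[\,\prod_{j=n+1}^N(1+M_j)\,\Big|\,\cG\,\Big]\ \leq\ \prod_{j=n+1}^N\big(1+Ce^{-cL}\big)\ \leq\ \exp\!\big(N\,Ce^{-cL}\big)\ \leq\ 2
\]
for all $L$ large, since $N=H_p(L)$ grows at most polylogarithmically in $L$ (see the Introduction) while $e^{-cL}$ decays exponentially, and \eqref{lemlb1} follows. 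The main obstacle is precisely the second step: obtaining the uniform geometric tail on the number of extra surrounding contours and decoupling it across the roughly $H_p(L)$ heights. This is where the cluster expansion in the presence of the sign constraints (Proposition \ref{lem:dks}) and the disjointness of the annuli $\cU_j$ are essential, and it is the one point where the passage from $p=1$ to general $p\in[1,\infty]$ has to be checked with care.
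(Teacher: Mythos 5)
Your reduction is correct, and it is in fact the same skeleton as the argument the paper relies on (the proof defers to \cite[Lemma 3.2]{CMT}): rewriting the right-hand side of \eqref{lemlb1} as $\tfrac12\,\bbE_{\L_L}\big[\1_{\{\phi_{\L_L}\geq 0\}}\1_F\prod_{j=n+1}^N\cN_j\big]$ with $F=\cap_{k=1}^n\sC_{\g^*_k,k}$, noting that $\cG:=\{\phi_{\L_L}\geq 0\}\cap F\cap\{\cN_j\geq 1\ \forall j>n\}$ is contained in $\{\phi_{\L_L}\geq 0\}\cap E$, and reducing the lemma to $\bbE_{\L_L}[\prod_{j>n}\cN_j\mid \cG]\leq 2$ are all fine, as are the nestedness observation and the final product bound $(1+Ce^{-cL})^{N}\leq 2$, since $N=H_p(L)$ grows only polylogarithmically.

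The genuine gap is the step you yourself flag as the main obstacle: the uniform conditional tail $\bbP_{\L_L}(M_j\geq k\mid\cG)\leq (Ce^{-cL})^k$. As written it is asserted, not proved, and the tools you invoke cannot deliver it directly. First, an unconditional Peierls-type bound on $\bbP_{\L_L}(\sC_{\g',j})$ is useless here, because $\bbP_{\L_L}(\cG)$ is itself exponentially small in $NL$; what is required is that an \emph{extra} contour costs $e^{-cL}$ \emph{relative to} configurations already realizing $\cG$, i.e.\ a genuinely conditional estimate, and your sketch never explains how the conditioning on global positivity and on the contours at the other $N-1$ levels is removed. Second, FKG cannot be used the way you use it (to control this conditioning or to decouple the levels): the event $\sC_{\g',j}$ is not monotone — it requires $\phi\geq j$ on $\D^+_{\g'}$ and simultaneously $\phi\leq j-1$ on $\D^-_{\g'}$ — so positive association of increasing events says nothing about it. Third, Proposition \ref{lem:dks} and Lemma \ref{lem:surftens} concern constrained partition functions and \emph{open} contours crossing a strip; they do not by themselves bound the probability that a closed level line is present under the conditioned Gibbs measure. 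The mechanism the paper actually uses is: condition on the nested contour structure, apply the domain Markov property together with monotonicity in the boundary conditions on the regions $S_i$ (this is where the FKG lattice condition of Lemma \ref{lemelbo} legitimately enters, via comparison of boundary conditions), and then invoke the contour-probability upper bound \cite[Proposition 2.7]{CLMST2} for $p=1$, replaced by \cite[Proposition 4.1]{LMS} for $p>1$. That replacement is itself substantive: for $p>1$, removing or shifting a contour changes edge energies by amounts not controlled by $|\g'|$, since $|\grad\phi|^p$ is nonlinear in the gradients, which is precisely why the external input from \cite{LMS} is needed rather than an argument improvised from the cluster expansion of Section \ref{clexp}. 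Without these two ingredients, your second and third paragraphs restate what must be shown rather than prove it.
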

\begin{proof}
Since the measure $\wt\bbP_{\L_L}$ satisfies the FKG property (cf.\ Lemma \ref{lemelbo} above)
one can use monotonicity with respect to boundary conditions in the regions $S_i$ and the 
proof follows as in \cite[Lemma 3.2]{CMT}, with the only difference that when $p>1$ the reference to \cite[Proposition 2.7]{CLMST2} must be replaced with \cite[Proposition 4.1]{LMS}. 
\end{proof}
From Lemma \ref{lemlb}, we see that  the lower bound in Theorem \ref{th:main} follows if,  as $L\to\infty$:
\begin{align}\label{lb7}
\sum_{\g_{n+1}\in\cC_{n+1},\dots,\g_N\in\cC_N}
&  \bbP_{\L_L}( \phi_{\L_L}\geq 0\,;\,\cap_{k=1}^n\sC_{\g^*_{k},k}\,;\,\cap_{j=n+1}^N\sC_{\g_{j},j})  \nonumber \\
  & \qquad\qquad \quad \geq \exp{\big(-8\b\t_{p,\b}NL(1+o(1))\big)},
 \end{align}
 uniformly in the fixed choice of $\g^*_k\in\cC_k$, $k=1,\dots,n$, with $n=\lfloor\log \log L\rfloor$.

Thanks to Lemma \ref{lemelbo} we can neglect the term $\wt\bbP_{\L_L}(\cap_{i=1}^{N+1}\{\phi_{S_i}\geq -i+1\})$ when proving \eqref{lb7}.
The proof of the lower bound is then concluded by showing that uniformly in the choice of $\g^*_k\in\cC_k$, $k=1,\dots,n$, with $n=\lfloor\log \log L\rfloor$, one has 
\begin{align}\label{elbo41}
&\sum_{\g_{n+1}\in\cC_{n+1},\dots,\g_N\in\cC_N}e^{-\b\sum_{i=1}^N|\g_i|}\,\frac{\wt Z_{\L_L,\g^*_1,\dots,\g^*_n,\g_{n+1}\ldots,\g_N}}{Z_{\L_L}} \nonumber\\ &\qquad\qquad\qquad\qquad \geq
\exp{\big(-8\b\t_{p,\b}NL(1+o(1))\big)}.
\end{align}
The estimate \eqref{elbo41} can be obtained with the same argument as in \cite[Lemma 3.3, Lemma 3.4]{CMT}. The only technical issue is that the cluster expansion used there - see \cite[Eq. (3.8)]{CMT} has to be adapted to the case $p>1$. However, this adaptation can be obtained by minor modifications of the arguments used in Section \ref{clexp} and Section \ref{surface_tension} above. We omit the details.

\section{A monotonicity property}\label{sec:mon}
In this section we extend to the case $p>1$ the monotonicity property of the partition
function which played a crucial role in the proof of the large
deviation result for the SOS model; cf.\ \cite[Theorem
4.1]{CMT}. In order to formulate it we need to recall the definition of
the \emph{staircase ensemble} introduced in \cite{CMT}.

\subsection{Staircase ensemble}\label{stairs}
In \eqref{zetod} we considered the case of a stepped boundary condition, which produced one open contour. Here we generalize that to the case of a multi-step boundary condition, which is associated with the presence of $n$ open contours. Consider the rectangle $\L_{L,M}$, for some $L,M\in \bbN$. 
Fix $n\in\bbN$ and integers 
\begin{equation}
\label{ref2}
-M\leq a_1\leq \cdots\leq a_n\leq M,\;\;\text{ and}\;\; -M\leq b_1\leq \cdots\leq b_n\leq M,  
\end{equation}
and set $a_0=b_0=-(M+1)$ and $a_{n+1}=b_{n+1}=M+1$. We define a ``staircase'' height $\t$ at the external boundary $\partial \L_{L,M}$ of our rectangle which, starting from 
height zero at the base of the rectangle (i.e. the set $(u,-(M+1)), u=-L,\dots,L$)  jumps by one at the locations specified by the two
$n$-tuples $\{a_i,b_i\}$ until it reaches height $n$:
\begin{align}\label{stairtau}
\tau(u,v)=
  \begin{cases}
  i & \ \text{if $u=-L-1$ and $a_i \leq v<a_{i+1}$ or $u=L+1$ and $b_i \leq v<b_{i+1}$, }\\
 0 & \  \text{if $u\in[-L,L]$ and $v=-(M+1)$}\\
n & \ \text{if $u\in[-L,L]$ and $v=M+1$},
  \end{cases}
\end{align}
where $i\in\{0,\dots,n\}$, see Figure \ref{fig:1}.
Note that if two or more values of the $a_i$ or $b_i$ coincide, then the boundary height $\t$ takes jumps
higher than $1$ at those points.

Next, let $Z(a_1,\dots,a_n; b_1,\dots, b_n; L,M):=Z_\L^\t$ denote the partition
function  in $\L=\L_{L,M}$ with the boundary condition $\t$
defined  in \eqref{stairtau} and let as usual $Z_\L$ denote the partition function on $\L$ with zero boundary condition. 
\begin{figure}[htb]
        \centering
 \begin{overpic}[scale=0.27]{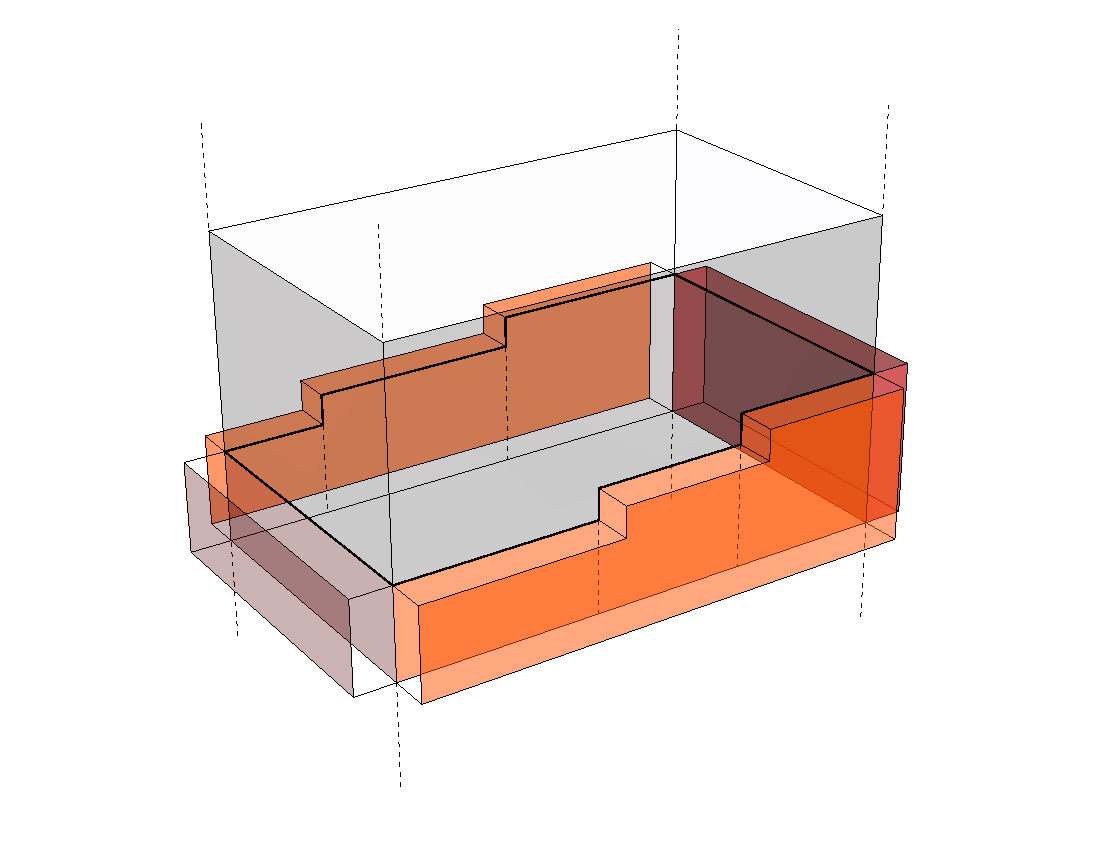}
\put(30.5,29){$z_1$}\put(30.5,31){$\cdot$}
\put(47,33.5){$z_2$}\put(47,35.8){$\cdot$}
\put(56,17){$z'_1$}
\put(68,21){$z'_2$}
\put(68.5,23.7){$\cdot$}
\put(56,19.5){$\cdot$}
\end{overpic}
        \caption{Sketch of the staircase boundary condition (\ref{stairtau}) in the rectangle $\L_{L,M}$ 
          for $n=2$. The points $z_i,z'_i$ have coordinates $z_i=(-L-1,a_i)$, and  $z'_i=(L+1,b_i)$.}
          \label{fig:1}
\end{figure}
Combining the expansions in Section \ref{clexp} and 
\cite[Section 2.5]{CMT}
one proves that the limit
\begin{equation}
    \label{eq:1}
\cZ(a_1,\dots,a_n;\, b_1,\dots,b_n;\, 
  L) :=    \lim_{M\to \infty}\frac{Z(a_1,\dots,a_n;\, b_1,\dots,b_n;\, 
  L,M)}{Z_\L}
  \end{equation}
  is well defined, for every fixed choice of parameters $a_1\leq\cdots,a_n$ and $b_1\leq \cdots\leq b_n$.

The main result here is that, for all $p\geq 1$, the effective
partition function defined in \eqref{eq:1} has the 
following monotonicity property w.r.t. the parameters
$\{a_i,b_i\}_{i=1}^n$. 
The case $p=1$ is \cite[Theorem 4.1]{CMT}.
\begin{theorem} 
\label{th:mon}
There exists $\b_0>0$ such that, for any $\b>\b_0$, any $a_1\leq\cdots,a_n$ and $b_1\leq \cdots\leq b_n$,  and any $L\in \bbN$
  \begin{gather}
    \label{eq:2}
\cZ(a_1,\dots,a_n;\, b_1,\dots,b_n;\, L)\leq \prod_{i=1}^n \cZ(a_i;\, b_i;\, 
  L).  
  \end{gather}
\end{theorem}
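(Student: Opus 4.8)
The plan is to follow the recursive monotonicity scheme of \cite[Theorem 4.1]{CMT}, replacing the two ingredients that are special to $p=1$ by counterparts valid for all $p\in[1,\infty]$: the cluster expansion with sign constraints of Proposition \ref{lem:dks}, and the FKG lattice condition, which holds for every $p\geq1$ because $t\mapsto|t|^p$ is convex (this is exactly the computation behind \eqref{convo}).

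First I would produce a contour-gas representation of the effective partition function. Starting from \eqref{eq:1} and performing, layer by layer, the same height shift and ``add and subtract $1$'' manipulation that leads to \eqref{zetod}, the staircase partition function becomes a sum over ordered tuples of open contours $\gamma_1<\cdots<\gamma_n$, the $i$-th one joining the dual points fixed by $a_i,b_i$ and the tuple being non-crossing and correctly stacked. Expanding each resulting constrained partition function $Z^0_{\L,\D^+,\D^-}$ by Proposition \ref{lem:dks} and cancelling the bulk potentials against $Z_\L$ via parts (1)--(2) of that proposition, I obtain
\begin{align}\label{plan-rep}
\cZ(a_1,\dots,a_n;b_1,\dots,b_n;L)=\sum_{\gamma_1<\cdots<\gamma_n}\ \prod_{i=1}^n e^{-\b|\gamma_i|+\Phi_{L,\infty}(\gamma_i)}\,\exp\Big(\textstyle\sum_{|S|\geq2}\Phi_S\Big),
\end{align}
where $\Phi_{L,\infty}(\gamma_i)$ is the single-contour decoration of \eqref{contourmod} and, by inclusion--exclusion, $\Phi_S$ collects the potentials $\varphi_{U^+,U^-}$ that interact with exactly the contours $(\gamma_i)_{i\in S}$; part (3) of Proposition \ref{lem:dks} shows $\Phi_S$ decays exponentially in the mutual distance of those contours. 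Since the single-contour factors reproduce $\cZ(a_i;b_i;L)$ when the contours range freely, the bound \eqref{eq:2} is equivalent to
\begin{align}\label{plan-reduce}
\E_\nu\Big[\mathbf 1\{\gamma_1<\cdots<\gamma_n\}\,\exp\Big(\textstyle\sum_{|S|\geq2}\Phi_S\Big)\Big]\leq1,
\end{align}
where $\nu$ is the product measure under which the $\gamma_i$ are independent with weights proportional to $e^{-\b|\gamma_i|+\Phi_{L,\infty}(\gamma_i)}$.

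Next I would prove \eqref{plan-reduce} by induction on $n$, peeling off the top contour. Fixing $\gamma_1,\dots,\gamma_{n-1}$ and summing over $\gamma_n$ constrained to lie above $\gamma_{n-1}$, the ordering constraint together with the interaction of $\gamma_n$ with the lower contours is read as a single-contour partition function in the region above the lower staircase, the lower contours acting as a floor boundary condition. The FKG property of the modified measure (Lemma \ref{lemelbo}, valid for all $p\geq1$) and monotonicity in the boundary conditions then show that lowering this floor to $-\infty$, i.e.\ releasing both the constraint and the interaction with $\gamma_1,\dots,\gamma_{n-1}$, can only increase the $\gamma_n$-sum; this bounds the partial sum by $\cZ(a_n;b_n;L)$ times the weight of the remaining $n-1$ contours, and the induction hypothesis closes the estimate. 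Letting $M\to\infty$ throughout, which is legitimate by \eqref{eq:1} and Lemma \ref{lem:surftens}, yields \eqref{eq:2}.

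The hard part will be precisely the interaction term $\sum_{|S|\geq2}\Phi_S$ in \eqref{plan-reduce}. The ordering constraint alone would give \eqref{eq:2} at once, since ordered tuples are a subset of all tuples and all weights are positive; but the cluster-expansion decorations couple contours that are close, these couplings are not of a definite sign, and they cannot simply be discarded. The delicate point of the recursion is to show that, conditionally on the lower contours, the FKG domination absorbs the interaction with $\gamma_n$ rather than competing with it. The only genuinely new work relative to \cite{CMT} is to verify that this step is insensitive to $p$; this reduces to the two uniform-in-$p$ inputs isolated above---the decay estimate of Proposition \ref{lem:dks}(3) and the convexity-based FKG inequality of Lemma \ref{lemelbo}---after which the recursive argument of \cite[Theorem 4.1]{CMT} transfers with only notational changes.
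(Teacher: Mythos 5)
You have correctly isolated the crux---the cluster-expansion decorations coupling nearby contours carry no definite sign---but your proposal does not resolve it, and the appeal to the ``recursive argument of \cite[Theorem 4.1]{CMT}'' to close the gap rests on a mischaracterization of that argument. The recursion in \cite{CMT}, which is also the route this paper takes, is \emph{not} a contour-peeling induction in which FKG removes a floor; it never confronts the finite-distance contour interactions at all. Instead one proves a single monotonicity statement at the level of the height field (Lemma \ref{lem:1} here): raising the top step of the staircase boundary condition by one, $(a_n,b_n)\mapsto(a_n+1,b_n+1)$, can only increase $\cZ$. This is proved by interpolating the boundary condition, $\t_s=s\t+(1-s)\t'$, writing
$\log\bigl(Z_\L^{\t}/Z_\L^{\t'}\bigr)=\int_0^1 \bbE_\L^{\t_s}\bigl[\psi_s(\phi(w))+\psi_s(\phi(w'))\bigr]\,ds$
with $\psi_s(x)=-\b\tfrac{d}{ds}|x-(n-1+s)|^p$, observing that $\psi_s$ is increasing in $x$ (convexity of $|\cdot|^p$; this is the only point where $p\geq 1$ enters), using FKG to replace $\t_s$ by a dominating boundary condition $\hat\t_s$, and then showing the resulting integral vanishes \emph{exactly}, by the symmetry $s\leftrightarrow 1-s$ combined with a vertical shift and a $180$-degree rotation of the strip (equation \eqref{eq:32}). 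Theorem \ref{th:mon} then follows by iterating Lemma \ref{lem:1} to push the top step arbitrarily far up, where the exponential decay of the potentials in Proposition \ref{lem:dks}(3) makes the top contour decouple, the partition function factorizes into $\cZ(a_1,\dots,a_{n-1};b_1,\dots,b_{n-1};L)\,\cZ(a_n;b_n;L)$ by vertical translation invariance, and one inducts on $n$. The cluster expansion is thus used only in the regime of large separation, where the interactions are genuinely negligible.

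By contrast, your induction requires a monotonicity statement for the decorated contour gas at \emph{finite} separation, and this is where the argument breaks. First, the measure in your reduction lives on contours, while FKG (Lemma \ref{lemelbo}) is an inequality for height-field measures; to transport it you must condition on the events $\sC_{\g_i,i}$, which are neither increasing nor decreasing (they impose $\phi\geq i$ on $\D^+_{\g_i}$ and $\phi\leq i-1$ on $\D^-_{\g_i}$), so ``lowering the floor to $-\infty$'' is not an FKG-monotone operation on the conditioned measure. Second, even granting a height-field reformulation, the quantity to be dominated is not a probability but a ratio of constrained partition functions carrying the decorations; since these have no sign, dropping them (or relaxing the ordering constraint) does not yield a one-sided bound---which is precisely why the naive ``ordered tuples are a subset of all tuples'' argument fails, as you note, and why the paper routes around the problem entirely. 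To repair your proof you would essentially have to rediscover the interpolation-plus-symmetry argument of Lemma \ref{lem:1}; that argument is the actual content of the theorem and is absent from your proposal.
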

The proof of Theorem \ref{th:mon} is based on the following key lemma. Once Lemma \ref{lem:1} below is established, the theorem follows by a simple iteration as in the proof of  \cite[Theorem 4.1]{CMT}.
\begin{lemma}
\label{lem:1}
For any $a_1\leq\cdots,a_n$ and $b_1\leq \cdots\leq b_n$,  
 \[
\cZ(a_1,\dots,a_n;\ b_1,\dots,b_n;\ 
  L)\leq \cZ(a_1,\dots,a_{n-1},a_n+1;\ b_1,\dots,b_{n-1}, b_n+1;\ 
  L).
\] 
\end{lemma}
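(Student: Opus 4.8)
The plan is to follow the FKG-based monotonicity scheme of \cite[Theorem 4.1]{CMT}, isolating the unique point at which the restriction $p=1$ was used there — the FKG lattice condition — and replacing it by its generalization to all $p\ge1$, which is precisely the convexity inequality \eqref{convo} established in Lemma \ref{lemelbo}. Since, as noted after the statement, Theorem \ref{th:mon} follows from Lemma \ref{lem:1} by a simple iteration (push $a_n,b_n\to\infty$ so that the outermost contour decouples, then induct on $n$), I focus entirely on the one-step inequality.

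First I would fix $M$ and, combining the cluster expansion of Proposition \ref{lem:dks} with the contour representation that produces \eqref{eq:1} (as in \cite[Section 2.5]{CMT}), rewrite $\cZ(a_1,\dots,a_n;b_1,\dots,b_n;L)$ as a gas of $n$ ordered, mutually non-crossing open contours $\gamma_1\prec\cdots\prec\gamma_n$ in the strip $\Lambda_{L,\infty}$, with the prescribed endpoints $(-L-\tfrac12,a_i-\tfrac12)$ and $(L+\tfrac12,b_i-\tfrac12)$, and weight $\exp(-\beta\sum_i|\gamma_i|+\Phi(\gamma_1,\dots,\gamma_n))$. Here $\Phi$ is the convergent sum of the cluster potentials $\varphi_{\Delta^+,\Delta^-}(V)$ of Proposition \ref{lem:dks}; by items (2)--(3) there it is invariant under vertical translations away from the contours and its dependence on each individual contour decays exponentially in the distance to it. The two members of the asserted inequality are then the \emph{same} contour gas, differing only in the endpoints of the outermost contour $\gamma_n$, which are raised from $(a_n,b_n)$ to $(a_n+1,b_n+1)$.

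The heart of the argument is a monotone coupling obtained by peeling off $\gamma_n$. Conditioning on $\gamma_1,\dots,\gamma_{n-1}$, the contour $\gamma_n$ and the part of the surface above it are confined to the interior of $\gamma_{n-1}$, where $\gamma_{n-1}$ acts as a rigid floor at height $n-1$; after the shift $\phi\mapsto\phi-(n-1)$ this becomes a single-contour ensemble above a hard wall at height $0$ carrying a boundary step of height one at $(a_n,b_n)$. Raising that step to $(a_n+1,b_n+1)$ lowers the lateral boundary toward the wall, and the content of the lemma is that this can only increase the conditional partition sum. This is exactly where FKG enters: since $t\mapsto|t|^p$ is convex for every $p\ge1$, the relevant measures satisfy \eqref{convo}, hence the FKG inequality (Lemma \ref{lemelbo}); the floor created by $\gamma_{n-1}$ is what turns the monotone comparison into a genuine, rather than vacuous, statement, the strip being otherwise invariant under vertical translations, so that the self-energy of $\gamma_n$ is unchanged by the shift. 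This single-contour-above-a-wall monotonicity is handled as in \cite{CMT} by the FKG coupling, and once the inequality is established pointwise in $\gamma_1,\dots,\gamma_{n-1}$, summing over the lower contours and letting $M\to\infty$ yields the lemma.

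I expect the main difficulty to lie not in the FKG step itself, which is handed to us by \eqref{convo}, but in controlling the interaction term $\Phi$: the decoration couples $\gamma_n$ to the lower contours and to the boundary, so the peeling above is not an exact factorization, and one must verify that the cross terms — quantified by the exponential decay and the vertical shift-invariance in Proposition \ref{lem:dks}(2)--(3) — do not spoil the monotone coupling. A closely related technical point is that the edges crossed by a contour carry the modified potential $h_{xy}(a)=(1+a)^p-1$ rather than $a^p$; one must check that the coupling remains order-preserving for these potentials as well, which again reduces to the convexity underlying \eqref{convo}. Once these interaction estimates are in place, the comparison proceeds essentially verbatim as for $p=1$.
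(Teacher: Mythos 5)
Your plan does not match the paper's proof, and the route you sketch leaves its central steps unresolved. The paper never represents the staircase ensemble as a decorated contour gas for this lemma, and it never conditions on the lower contours. It works directly with the finite-$M$ partition functions: since $\tau$ and $\tau'$ differ only at the two boundary sites $z=(-L-1,a_n)$ and $z'=(L+1,b_n)$, one interpolates $\tau_s=s\tau+(1-s)\tau'$ and writes
\[
\log\frac{Z_\L^{\tau}}{Z_\L^{\tau'}}=\int_0^1 \bbE_\L^{\tau_s}\bigl[\psi_s(\phi(w))+\psi_s(\phi(w'))\bigr]\,ds,
\qquad
\psi_s(x)=-\b\,\tfrac{d}{ds}\,|x-(n-1+s)|^p,
\]
where $w,w'$ are the neighbors of $z,z'$ in $\L$. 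Convexity of $t\mapsto|t|^p$ (the only place $p\geq 1$ enters) makes $\psi_s$ increasing in $x$, so by FKG one may \emph{raise} every boundary value smaller than $n-1$ up to $n-1$ --- i.e.\ erase the lower steps altogether, the exact opposite of conditioning on their contours --- and it then suffices to show that the integral vanishes for the resulting single-step boundary condition $\hat\tau_s$ in the infinite strip $\L_{L,\infty}$. This is done by an exact symmetry pairing $s\leftrightarrow 1-s$ (sign flip $\phi\to-\phi$, vertical shift by one, $180$-degree rotation of the strip), using $\psi_s(x)=-\psi_{-s}(-x)$.

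The gaps in your version are concrete. (i) FKG orders expectations of increasing functions under a single measure, or stochastically couples two measures; it does not, by itself, compare two \emph{partition functions}. Your key claim --- that raising the step of $\g_n$ ``can only increase the conditional partition sum'', ``handled as in \cite{CMT} by the FKG coupling'' --- never says how; the device that actually makes such a comparison possible, in \cite{CMT} as in the present paper, is the interpolation identity above, which your proposal does not produce. (ii) Conditioning on $\g_{n-1}$ does not create a ``rigid floor'': an $(n-1)$-contour imposes the two-sided constraint $\phi_{\D^+_{\g_{n-1}}}\geq n-1$ \emph{and} $\phi_{\D^-_{\g_{n-1}}}\leq n-2$, which is not an increasing event, so stochastic domination cannot absorb it; moreover, fixing the lower contours destroys the vertical symmetry of the strip, which is precisely what closes the argument in the end (the $s\leftrightarrow 1-s$ pairing is available only because FKG has first reduced matters to the translation-covariant single-step ensemble). (iii) The decorations $\Phi$ coupling $\g_n$ to the lower contours and to the boundary are of indefinite sign and carry no monotonicity; you rightly call controlling them ``the main difficulty'', but you propose no mechanism, and the exponential smallness in Proposition \ref{lem:dks}(3) cannot supply one, because the inequality to be proved is exact, with no error-term slack. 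In short, each hard point of your plan is deferred rather than solved, and the paper's proof avoids all three at once by interpolating the boundary condition instead of peeling contours.
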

\begin{proof}[Proof of  Lemma \ref{lem:1}]
Set $\L:= \L_{L,M}$ for some large fixed $M>\max\{a_n,b_n,-a_1,-b_1\}$. Let
$\t,\t'$ be the boundary conditions associated to
$\{a_i,b_i\}_{i=1}^n$ and $\{a'_i,b'_i\}_{i=1}^n$ according to
\eqref{stairtau}, where $a'_i=a_i,b'_i=b_i$, for $i=1,\dots,n-1$, while $a'_n=a_n+1, b'_n=b_n+1$.  
 It suffices to establish that 
$$\lim_{M\to\infty}\left(\frac{Z_\L^\t}{Z_\L}-\frac{Z_\L^{\t'}}{Z_\L}\right)\leq 0.$$ 
We shall prove the equivalent claim:
 \begin{gather}
    \label{eq:02}
\lim_{M\to\infty}\log\frac{Z_\L^\t}{Z_\L^{\t'}}\leq 0.  
  \end{gather}
Define the points
$z=(-(L+1),a_n),w=(-L,a_n)$ and  $z'=(L+1,b_n),\
w'=(L,b_n)$, so that $w$ (resp.\ $w'$) is the nearest neighbor of $z$ (resp.\ $z'$) in $\L$, see Figure \ref{fig:010}.
\begin{figure}[htb]
        \centering
 \begin{overpic}[scale=0.3]{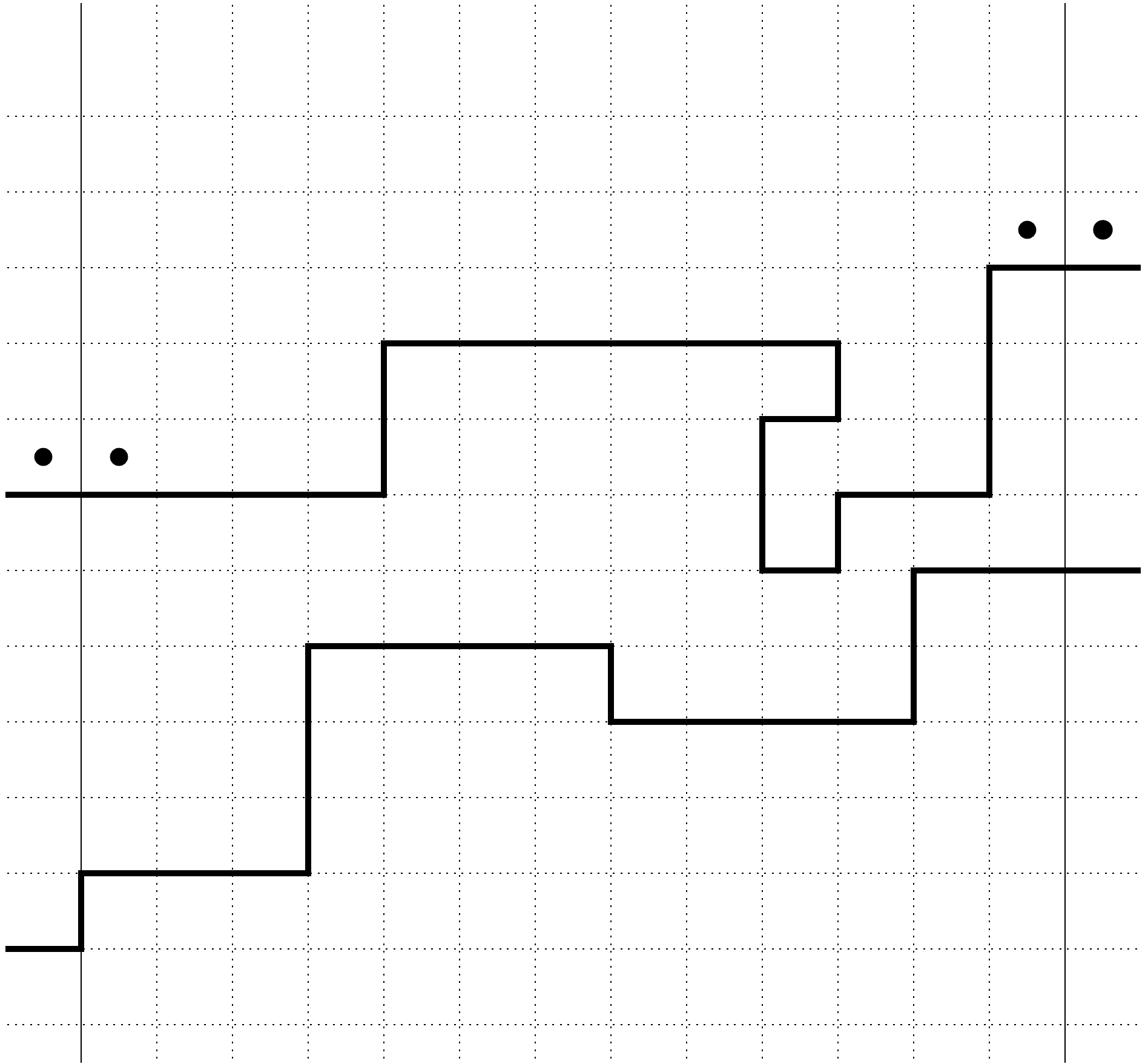}
\put(94.5,77) {$z'$}
\put(2,57){$z$}
\put(8,57){$w$}
\put(86.5,77) {$w'$}
\put(2,2.3){$0$}
\put(95,30){$0$}
\put(2,34.5){$1$}
\put(95,54.5){$1$}
\put(2,71){$2$}
\put(95,87){$2$}
\end{overpic}
        \caption{Sketch of the staircase boundary condition $\t$ with $n=2$ steps as seen from
               above, with two open contours and the pairs of vertices appearing in the proof of  Lemma \ref{lem:1}: 
          $z=(-L-1,a_2), w=(-L,a_2)$, $z'=(L+1,b_2), w'=(L,b_2)$.}\label{fig:010}
\end{figure}

To prove \eqref{eq:02}, let $\t_s=s\t+(1-s)\t'$ 
and write
\begin{equation}\label{diffz}
\log \frac{Z_\L^\t}{Z_\L^{\t'}}= \int_0^1 ds \, \frac{\tfrac{d}{ds}Z_\L^{\tau_s}}{Z_\L^{\tau_s}}
= \int_0^1 ds \, \bbE_\L^{\tau_s}[\psi_s(\phi(w))+\psi_s(\phi({w'}))],
\end{equation}
where \begin{equation}\label{phisz}\psi_s(x):= -\b\,\frac{d}{ds}|x-(n-1+s)|^p,\end{equation}
which is well defined for almost all $s\in[0,1]$, and $\bbE_\L^{\tau_s}$ denotes expectation with respect to $\bbP_\L^{\tau_s}$. 
Observe that $\psi_s(x)$ is increasing in $x$. Indeed, for almost all $s\in[0,1]$
\begin{equation}\label{conv}
\frac{d}{dx}\psi_s(x) = \b\frac{d^2}{dx^2}|x-(n-1+s)|^p\geq 0\,,
\end{equation}
for all $p\geq 1$. Then, by the FKG inequality we can raise the boundary condition to $n-1$ at all boundary vertices which had a boundary condition less than $n-1$. This yields a new boundary condition $\hat\t_s$ 
 such that
$$
\bbE_\L^{\tau_s}[\psi_s(\phi(w))+\psi_s(\phi({w'}))]\leq \bbE_\L^{\hat \tau_s}[\psi_s(\phi(w))+\psi_s(\phi({w'}))].
$$
Thus, from \eqref{diffz}, it suffices to show that 
  \begin{gather}
    \label{eq:22}
\int_0^1 \bbE_{L,\infty}^{\hat \tau_s}[\psi_s(\phi(w))+\psi_s(\phi({w'}))]\,ds  =0,
  \end{gather}
  where $\bbE_{L,\infty}^{\hat \tau_s}$ denotes the expectation in the infinite strip $\L_{L,\infty}$, obtained by taking the limit as $M\to\infty$ of $\bbE_\L^{\hat \tau_s}$. Note that this limiting Gibbs measure is well defined by the usual arguments; see e.g.\ \cite[Section 2.5]{CMT}. 
  
  We are going to show that for all $s\in[0,1]$
  \begin{gather}
    \label{eq:32}
\bbE_{L,\infty}^{\hat \tau_s}[\psi_s(\phi(w))+\psi_s(\phi({w'}))] + \bbE_{L,\infty}^{\hat \tau_{1-s}}[\psi_{1-s}(\phi(w))+\psi_{1-s}(\phi({w'}))]  =0.
  \end{gather}
  Clearly, \eqref{eq:32} implies \eqref{eq:22}.
  
  First, by vertical translation invariance, one can pretend that $n=1$, so that 
  $\hat\t_s$ is $0$ or $1$, except at $z$ and $z'$ where it equals $s$, and that 
  $|x-s|^p$ replaces $|x-(n-1+s)|^p$ in \eqref{phisz} above. In particular, $\psi_s(x)=-\psi_{-s}(-x)$. Then, by symmetry one has 
  $$
  \bbE_{L,\infty}^{\hat \tau_s}[\psi_s(\phi(w))+\psi_s(\phi({w'}))] + \bbE_{L,\infty}^{-\hat \tau_{s}}[\psi_{-s}(\phi(w))+\psi_{-s}(\phi({w'}))]  =0.
  $$
  Finally, by shifting vertically all heights by $+1$ and by a 180 degrees rotation of the $\L_{L,\infty}$ geometry , one sees that
  $$
  \bbE_{L,\infty}^{-\hat \tau_{s}}[\psi_{-s}(\phi(w))+\psi_{-s}(\phi({w'}))]  =\bbE_{L,\infty}^{\hat \tau_{1-s}}[\psi_{1-s}(\phi(w))+\psi_{1-s}(\phi({w'}))].
  $$ 
  This proves \eqref{eq:32}. 
\end{proof}  
 \bigskip
%
 \section{Upper bound}\label{sec:upper}
From \eqref{pospo1}-\eqref{posp1}, we see that once we prove the upper bound for $\bbP(\phi_{\L_L}\geq 0)$, then the same upper bound follows for the finite volume probabilities $
\bbP_{\L_L}(\phi_{\L_L}\geq 0)$. 

We start by summarizing a key result from \cite{LMS}.  The following facts follow from \cite[Theorem 2, Theorem 4 and Section 4.4]{LMS}. The corresponding statements for $p=1$ are in \cite[Theorem 2]{CLMST2}. Below, $\bbP_{\L_L}^+$ denotes the measure $\bbP_{\L_L}$ conditioned on the event $\phi_{\L_L}\geq 0$; see \eqref{posev}.
\begin{proposition}
  \label{mainthm:floor-shape}
  For any $\d>0$ and $K>0$, define $A_L(\d,K)$, as the event that there exists a lattice circuit $\cC$ surrounding $\L':=\L_{(1-\d)L}$
such that $\phi(x)\geq H_p(L) - K$, for all $x\in\cC$, where $H_p(L)$ is defined as in \eqref{eq-H-def}.
There exists $\b_0>0$ such that, for any $p\geq1$, $\beta > \b_0$ one has: For any $\d>0$, there exists $K\in\bbN$ such that
\begin{align}\label{thlms}
\lim_{L\to\infty}\bbP_{\L_L}^+(A_L(\d,K))=1
\end{align}
\end{proposition}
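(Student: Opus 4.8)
The plan is to obtain this statement as a purely geometric corollary of the sharp description of the top level lines under the floor constraint, which for $p>1$ is furnished by \cite[Theorem 2, Theorem 4 and Section 4.4]{LMS} (and by \cite[Theorem 2]{CLMST2} for $p=1$). First I would isolate the two facts I need about $\bbP_{\L_L}^+$. The first, weaker, fact is localization: with probability tending to $1$ all but an $o(1)$ fraction of the sites of $\L_L$ sit at height $H_p(L)+O(1)$. The second, and genuinely useful, fact is geometric: there is a constant $K_0=K_0(\b)$ such that, with probability tending to $1$, the outermost $(H_p(L)-K_0)$-contour is macroscopic and the boundary layer in which the field descends from $H_p(L)$ to $0$ is confined to within $o(L)$ of $\partial\L_L$. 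From this second fact I would extract that, for every fixed $\d>0$ and all $L$ large, any $(H_p(L)-K)$-contour with $K\geq K_0$ surrounds $\L'=\L_{(1-\d)L}$.

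Granting such a surrounding contour $\g$, I would build the circuit $\cC$ directly from it. By Definition \ref{contourdef} the inner layer $\D^+_\g$ consists of sites all at height at least $H_p(L)-K$, and, since $\g$ is a closed contour surrounding $\L'$, the set $\D^+_\g$ contains a nearest-neighbour circuit around $\L'$. Taking $\cC$ to be this circuit realizes exactly the event $A_L(\d,K)$, so that the two facts above yield $\bbP_{\L_L}^+(A_L(\d,K))\to1$. Equivalently, one may phrase the statement through planar duality: the complementary event $A_L(\d,K)^c$ forces a $*$-connected path of sites of height strictly below $H_p(L)-K$ to cross the annulus $\L_L\setminus\L'$ of width at least $\d L$, and the confinement of the boundary layer rules this out with probability tending to $1$.

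The step I expect to be the main obstacle is precisely the reduction to the geometric fact rather than to the localization fact: a bound on the \emph{number} of low sites is useless here, since a disconnecting $*$-path of low sites uses only $O(L)$ of them, so one really needs the finer control that pins the macroscopic level lines near their deterministic positions and keeps the descent confined to width $o(L)$. For $p=1$ this is carried out in \cite{CLMST2}; the only new point for general $p\in[1,\infty]$ is to rerun that analysis on top of the cluster expansion of Proposition \ref{lem:dks} and the surface tension of Lemma \ref{lem:surftens} established in Section \ref{prelim}, which is exactly what \cite{LMS} supplies. I would therefore spend the bulk of the effort verifying that these two ingredients feed into the level-line estimates uniformly in $p$, and that $K$ may be chosen depending only on $\d$ and $\b$, not on $L$.
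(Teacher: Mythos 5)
The paper gives no self-contained proof of this proposition: it is stated as a direct consequence of \cite[Theorem 2, Theorem 4 and Section 4.4]{LMS} for general $p$ (and of \cite[Theorem 2]{CLMST2} for $p=1$), which is exactly the reduction you propose, including your correct observation that one needs the geometric confinement of the level lines near $\partial\L_L$ rather than a mere bound on the fraction of low sites. Your proposal thus takes essentially the same route as the paper; the only detail to phrase carefully is that the inner layer $\D^+_\g$ of a closed contour is in general only $*$-connected (diagonal pinches are possible), so the step from a surrounding $(H_p(L)-K)$-contour to a lattice circuit should be stated with whatever connectivity convention the cited results of \cite{LMS} actually deliver.
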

From Proposition \ref{mainthm:floor-shape} and the simple monotonicity argument in \cite[Proposition 5.1]{CMT} we obtain that: for any $\d>0$, there exists $K\in\bbN$ such that 
\begin{align}\label{tho}
\lim_{L\to\infty}\bbP(A_L(\d,K)\tc \phi_{\L_L}\geq 0)=1.
\end{align}
From the straightforward fact that for any event $A$ one has 
$$\bbP(\phi_{\L_L}\geq 0) \leq \frac{\bbP(A)}{
\bbP(A\tc \phi_{\L_L}\geq 0)}\,,$$
it follows that the desired upper bound for $\bbP(\phi_{\L_L}\geq 0) $ is a consequence of the statement: for any $\d>0$, for any $K\in\bbN$,  
\begin{align}\label{tholms}
\limsup_{L\to\infty}\frac1{8LH_p(L)}\log \bbP(A_L(\d,K))\leq - \b\t_{p,\b}(1-\d).
\end{align}
Observe that the event $A_L(\d,K)$ implies that there exist contours $\g_j$, $j=1,\dots, H_p(L)-K,$ such that $$
\L_L\supset\L_{\g_1} \supset\cdots\supset\L_{\g_{H_p(L)-K}}\supset \L_{(1-\d)L},$$
and such that $\g_j$ is a $j$-contour for the surface for each $j$.  
Then the proof of \eqref{tholms} can be obtained by the very same argument used in \cite[Proof of Proposition 5.2]{CMT}. To extend this argument to the setting $p>1$ one only needs to make sure that: 1) the cluster expansion techniques used there apply here as well, and 2)  the crucial monotonicity of partition functions in the staircase ensemble holds for all $p\geq 1$. The first point can be checked with minor modifications of the arguments in Section \ref{clexp}. The second point has been established in Theorem \ref{th:mon} above.  

\section*{Acknowledgments} F. T. was partially supported by the CNRS PICS grant ``Discrete random interfaces and Glauber dynamics''
\bibliographystyle{plain}
\bibliography{sos.bib}

\end{document}